\documentclass[reqno]{amsart}
\usepackage[colorlinks=true]{hyperref}
\hypersetup{urlcolor=blue, citecolor=red}

\usepackage{euscript}

\newtheorem{Theorem}{Theorem}[section]

\newtheorem{Lemma}[Theorem]{Lemma}
\newtheorem{Proposition}[Theorem]{Proposition}
\theoremstyle{definition}

\def \R{\mathbb R}
\def \p{\partial}
\def \MP{\mathcal{MP}}
\def\g{\gamma}
\def \g{\gamma}
\def \s{\sigma}
\def \pxi{\frac{\partial}{\partial x^i}}
\def \<{\langle}
\def \>{\rangle}

\renewcommand{\(}{\left(}
\renewcommand{\)}{\right)}

\newcommand{\grad}{\operatorname{grad}}
\newcommand{\hess}{\operatorname{Hess}}
\newcommand{\de}[2]{\frac{\partial #1}{\partial #2}}

\begin{document}

\title[Boundary and scattering rigidity problems for $\MP$-systems]{Boundary and scattering rigidity problems in the presence of a magnetic field and a potential}

\author[Yernat M. Assylbekov]{Yernat M. Assylbekov}
\address{Department of Mathematics,
University of Washington,
Seattle, WA 98195-4350, USA}
\email{y\_assylbekov@yahoo.com}

\author[Hanming Zhou]{Hanming Zhou}
\address{Department of Mathematics,
University of Washington,
Seattle, WA 98195-4350, USA}
\email{hzhou@math.washington.edu}

\begin{abstract}
In this paper, we consider a compact Riemannian manifold with boundary, endowed with a magnetic potential $\alpha$ and a potential $U$. For brevity, this type of systems are called $\MP$-systems. On simple $\MP$-systems, we consider both the boundary rigidity problem and scattering rigidity problem, see the introduction for details. We show that these two problems are equivalent on simple $\MP$-systems. Unlike the cases of geodesic or magnetic systems, knowing boundary action functions or scattering relations for only one energy level is insufficient to uniquely determine a simple $\MP$-system, even under the assumption that we know the restriction of the system on the boundary $\p M$, and we provide some counterexamples. These problems can only be solved up to an isometry and a gauge transformations of $\alpha$ and $U$. We prove rigidity results for metrics in a given conformal class, for simple real analytic $\MP$-systems and for simple two-dimensional $\MP$-systems.
\end{abstract}
\maketitle

\section{Introduction}
\subsection{Posing the problems}
Given a Riemannian manifold $(M,g)$ of dimension $n\ge2$ with boundary, endowed with a magnetic field $\Omega$, that is a closed 2-form, we consider the law of motion described by the Newton's equation
\begin{equation}\label{MP}
\nabla_{\dot\gamma}\dot\gamma=Y(\dot\gamma)-\nabla U(\gamma),
\end{equation}
where $U$ is a smooth function on $M$, $\nabla$ is the Levy-Civita connection of $g$ and $Y:TM\to TM$ is the \emph{Lorentz force} associated with $\Omega$, i.e., the bundle map uniquely determined by
\begin{equation}\label{lorentz}
\Omega_x(\xi,\eta)=\langle Y_x(\xi),\eta\rangle
\end{equation}
for all $x\in M$ and $\xi,\eta\in T_xM$. A curve $\gamma:[a,b]\to M$, satisfying \eqref{MP} is called an \emph{$\MP$-geodesic}. The equation \eqref{MP} defines a flow $\phi_t$ on $TM$ that we call an \emph{$\MP$-flow}. These are not standard terms in general. Note that time is not reversible on the $\MP$-geodesics, unless $\Omega=0$.

When $\Omega=0$ the flow is called {\it potential flow}; while if $U=0$ we obtain the {\it magnetic flow}. Therefore, the equation \eqref{MP} describes the motion of a particle on a Riemannian manifold under the influence of a magnetic field $\Omega$ in a potential field $U$. Magnetic flows were firstly considered in \cite{AS, Ar} and it was shown in \cite{ArG, Ko, N1, N2, NS, PP} that they are related to dynamical systems, symplectic geometry, classical mechanics and mathematical mechanics.

When $\Omega$ is exact, i.e. $\Omega=d\alpha$ for some magnetic potential $\alpha$, the $\MP$-flow also arises as the Hamiltonian flow of $H(x,p)=\frac12(p+\alpha)_{g(x)}^2+U(x)$ with respect to the canonical symplectic form of $T^*M$.

For $\MP$-flow the energy $E(x,\xi)=\frac{1}{2}|\xi|^2+U(x)$ is an integral of motion. By the Law of Conservation of Energy, for every $\MP$-geodesic the energy is constant along it. Unlike the geodesic flow, where the flow is the same (up to time scale) on any
energy levels, $\MP$-flow depends essentially on the choice of the energy
level. Throughout the paper we assume the energy level $k>\sup_{x\in M}U(x)$ with $S^kM=E^{-1}(k)$, \emph{the bundle of energy $k$}.
Note that it is necessary for $k$  to be strictly greater than the supremum of $U$. Because otherwise we would get that at some $x\in M$ any vector $\xi\in S^k_x M$ has non-positive length.

We define the action $\mathbb A(x,y)$ between boundary points as
a minimizer of the appropriate action functional, see (\ref{Axy})
and Appendix~\ref{Mane-action}. In the case $\Omega=0$ and $U=0$, the function
$\mathbb A(x,y)$ coincides with the boundary distance function $d_g(x,y)$.
In this case, we cannot recover $g$ from $d_g$ up to isometry,
unless some additional assumptions are imposed on $g$, see, e.g., \cite{Cr1}. One such assumption is the simplicity of the metric, see, e.g., \cite{Mi, Sh1, SU, SU1}. We consider below the analog of simplicity for $\MP$-systems.

Let $\Lambda$ denotes the second fundamental form of $\p M$, and $\nu(x)$ denotes the inward unit vector normal to $\p M$ at $x$. We say that $\p M$ is \emph{strictly $\MP$-convex} if
\begin{equation}\label{strict-conv}
\Lambda(x,\xi)>\langle Y_x(\xi),\nu(x)\rangle-d_xU(\nu(x))
\end{equation}
for all $(x,\xi)\in S^k(\p M)$.

For $x\in M$, we define the {\em $\MP$-exponential map} at $x$ to be the partial map
$\exp^{\MP}_x:T_xM\to M$ given by
$$
\exp^{\MP}_x(t\xi)=\pi\circ\phi_t(\xi),\quad t\ge 0,\ \xi\in S^k_xM.
$$
It is not hard to show that, for every $x\in M$,
$\exp^{\MP}_x$  is a $C^1$-smooth partial map on $T_xM$
which is $C^\infty$-smooth on $T_xM\setminus\{0\}$.

We say that $M$ is {\em simple} w.r.t. $(g,\Omega,U)$ if $\partial M$ is strictly $\MP$-convex and the $\MP$-exponential map
$\exp^{\MP}_x:(\exp^{\MP}_x)^{-1}(M)\to M$
is a diffeomorphism for every $x\in M$. In this case, $M$ is diffeomorphic to the unit ball of $\R^n$. Therefore $\Omega$ is exact, and we let $\alpha$ be a magnetic potential, i.e. $\alpha$ is a 1-form on $M$ such that
$$
d\alpha=\Omega.
$$
Henceforth we call $(g,\alpha,U)$ a {\it simple $\MP$-system} on $M$. We will also say that $(M,g,\alpha,U)$ is a simple $\MP$-system. It is easy to see that the simplicity is stable under a small perturbation of the energy level.

First, we state the boundary rigidity problem. Given $x,y\in M$, let
$$
\mathcal C(x,y)=\{\gamma:[0,T]\to M:T>0,\gamma(0)=x,\gamma(T)=y, \text{ $\gamma$ is absolutely continuous}\}.
$$
The {\it time free action} of a curve $\gamma\in \mathcal C(x,y)$ w.r.t. $(g,\alpha,U)$ is defined as
$$
\mathbb A(\gamma)=\frac{1}{2}\int_0^T|\dot\gamma(t)|^2\,dt+kT-\int_{\gamma}(\alpha+U)
$$

For a simple $\MP$-system, $\MP$-geodesics with energy $k$ minimize the time free action (see Appendix \ref{Mane-action})
\begin{equation}\label{Axy}
\mathbb A(x,y):=\inf_{\gamma\in\mathcal C(x,y)}\mathbb A(\gamma)=2kT_{x,y}-\int_{\gamma_{x,y}}(\alpha+2U),
\end{equation}
where $\gamma_{x,y}:[0,T_{x,y}]$ is the unique $\MP$-geodesic with constant energy $k$ from $x$ to $y$. The function $\mathbb A(x,y)$ is referred to as Ma\~n\'e's action potential (of energy $k$), and we call the restriction $\mathbb A|_{\p M\times\p M}$ the {\it boundary action function}.

We say that two $\MP$-systems $(g,\alpha,U)$ and $(g',\alpha',U')$ are {\it gauge equivalent} if there is a diffeomorphism $f:M\to M$, which is the identity on the boundary, and a smooth function $\varphi:M\to \R$, vanishing on the boundary, such that $g'=f^*g$, $\alpha'=f^*\alpha+d\varphi$ and $U'=U\circ f$. Observe that given two gauge equivalent $\MP$-systems, if one is simple, then the other one is also simple. Moreover, if two simple $\MP$-systems are gauge equivalent, then they have the same boundary action function.

The {\it boundary rigidity problem in the presence of a magnetic field and a potential} studies that to which extend an $\MP$-system $(g,\alpha,U)$ on $M$ is determined by the boundary action functions. By the above observation, one can only expect to obtain the uniqueness up to gauge equivalence. For the zero potential, i.e. $U=0$, we obtain the boundary rigidity problem for the magnetic systems that was considered by N. Dairbekov, G. Paternain, P. Stefanov and G. Uhlmann in \cite{DPSU}. In the absence of both magnetic fields and potentials, i.e. $\Omega=0$ and $U=0$, we come to the ordinary boundary rigidity problem for the Riemannian metrics. For the recent surveys on the ordinary boundary rigidity problem see \cite{Cr2,SU2}. It also worths to mention that recently P. Stefanov, G. Uhlmann and A. Vasy \cite{SUV} proved the boundary rigidity with partial data for metrics in a given conformal class, this is so far the only local boundary rigidity result.

Next, we define a scattering relation and state the {\it scattering rigidity problem in the presence of a magnetic field and a potential}. Let $\p_+ S^k M$ and $\p_- S^k M$ denote the bundles of inward and outward vectors of energy $k$ over $\p M$
\begin{align*}
\p_\pm S^k M=\{(x,\xi)\in S^k M:x\in \p M, \pm\langle\xi,\nu(x)\rangle\ge 0\}
\end{align*}
where $\nu$ is the inward unit vector normal to $\p M$. For $(x,\xi)\in \p_+ S^k M$ let $\tau(x,\xi)$ be the time when the $\MP$-geodesic $\gamma_{x,\xi}$, such that $\gamma_{x,\xi}(0)=x$,
$\dot\gamma_{x,\xi}(0)=\xi$, exits. By
Lemma \ref{l-l} the function $\tau:\p_+ S^kM\to \R$ is smooth.

The scattering relation $\mathcal S:\p_+ S^k M\to\p_-S^k M$ of an $\MP$-system $(M,g,\alpha,U)$ is defined as
$$
\mathcal S(x,\xi)=(\phi_{\tau_+(x,\xi)}(x,\xi))=(\gamma_{x,\xi}(\tau_+(x,\xi)),\dot\gamma_{x,\xi}(\tau_+(x,\xi))).
$$

Observe that two gauge equivalent $\MP$-systems have the same scattering relation. Is this the only type of nonuniqueness? In other words, the scattering rigidity problem studies whether a simple $\MP$-system $(M,g,\alpha,U)$, up to gauge equivalence, is uniquely determined by the scattering relations. In the Euclidean space this problem was considered by R. G. Novikov \cite{N}, in the absence of magnetic field, and by A. Jollivet \cite{Jol}. On Riemannian manifolds endowed with magnetic fields, scattering rigidity problem was studied by N. Dairbekov, G. Paternain, P. Stefanov and G. Uhlmann in \cite{DPSU}, by P. Herreros in \cite{H}, and by P.~Herreros and J. Vargo in \cite{HJ}. The reconstruction of both the Riemannian metrics and magnetic fields from the scattering relations was considered by N. Dairbekov and G. Uhlmann \cite{DU} for simple two-dimensional magnetic systems.

For simple $\MP$-systems, the boundary rigidity and the scattering rigidity problems are equivalent, see Lemma \ref{jet} and Theorem~\ref{equivalence of two problems}. Therefore, we formulate all rigidity results in terms of the boundary rigidity problem. However, not like the boundary rigidity problems for simple manifolds or simple magnetic systems ( with energy $1/2$), the boundary rigidity problem for simple $\MP$-systems needs the information of the boundary action functions for two different energy levels, see the counterexamples in Section 3 and the proofs of the main results in Section 5 for details.

We consider these problems under various natural restrictions: simple $\MP$-systems with metrics in a given conformal class, simple real-analytic $\MP$-systems and simple two-dimensional $\MP$-systems.

\subsection{Structure of the paper}
The rest of the paper is organized as follows. In Section 2, we show by doing the change of metrics, one can reduce a simple $\MP$-system to a simple magnetic system with the same boundary action function. Section 3 provides counterexamples which show that knowing the boundary action function for only one energy level is insufficient for solving the boundary rigidity problem, even under the assumption that the restriction of the system on the boundary $\p M$ is known. In Section 4, we demonstrate the equivalence between the boundary rigidity problem and the scattering rigidity problem for a simple $\MP$-system. Section 5 is devoted to the proofs of the boundary rigidity for various systems, namely, simple $\MP$-systems with metrics in a given conformal class, simple real-analytic $\MP$-systems and simple two-dimensional $\MP$-systems. We give a final remark on the case that we only know the boundary action function for one energy level in Section 6.

\subsection{Acknowledgements}

The authors thank their advisor Prof. Gunther Uhlmann for helpful suggestions and reading an earlier version of this paper. The authors also thank Prof. Nurlan Dairbekov for the suggestion on the possible approach for this problem. The work of both authors was partially supported by NSF.

\section{Relation between $\MP$-systems and magnetic systems}
\subsection{Reduction to the magnetic system}
For a fixed energy level $k>\sup_{x\in M}U(x)$, let $\sigma(t)$ be an $\MP$-geodesic with the constant energy $k$. Consider the time change
$$
s(t)=\int_0^t 2(k-U(\sigma))\,dt.
$$
Then $s$ is the arclength of $\g(s)=\s(t(s))$ under the metric $G=2(k-U)g$. The following version of Maupertuis' principle says that $\gamma(s)=\sigma(t(s))$ is a unit speed magnetic geodesic of the magnetic system $(G,\alpha)$.

\begin{Theorem}\label{MP system is magnetic system}
Let $(g,\alpha,U)$ be an $\MP$-system on $M$ and let $k$ be a constant such that $k>\sup_{M} U$. Suppose $\sigma(t)$ is an $\MP$-geodesic of energy $k$. Then $\gamma(s)=\sigma(t(s))$ is a unit speed magnetic geodesic of the magnetic system $(G,\alpha)$.
\end{Theorem}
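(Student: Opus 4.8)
The plan is to check separately the two conditions defining a unit speed magnetic geodesic of the system $(G,\alpha)$: that $|\dot\gamma|_G\equiv1$, and that $\gamma$ solves the magnetic geodesic equation $\nabla^G_{\dot\gamma}\dot\gamma=Y^G(\dot\gamma)$. Here $\nabla^G$ denotes the Levi-Civita connection of $G=2(k-U)g$, and $Y^G$ the Lorentz force of $(G,\Omega)$. Throughout I write $\rho=2(k-U)$, and use $\dot{}$ and $'$ for $d/dt$ and $d/ds$ respectively; since $ds/dt=\rho$, we have $\gamma'=\rho^{-1}\dot\sigma$ along the curve.

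Two preliminary identities do most of the work. First, conservation of energy along $\sigma$ gives $|\dot\sigma|_g^2=2(k-U)=\rho$, from which $|\dot\gamma|_G^2=\rho\,|\gamma'|_g^2=\rho^{-1}|\dot\sigma|_g^2=1$, settling the unit speed claim. Second, comparing $\Omega(\xi,\eta)=\langle Y(\xi),\eta\rangle_g$ with $\Omega(\xi,\eta)=\langle Y^G(\xi),\eta\rangle_G=\rho\,\langle Y^G(\xi),\eta\rangle_g$ yields the pointwise relation $Y^G=\rho^{-1}Y$ between the two Lorentz forces.

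For the magnetic geodesic equation I would compute $\nabla^G_{\gamma'}\gamma'$ directly. Writing $G=e^{2\phi}g$ with $\phi=\tfrac12\ln\rho$, so that $\grad_g\phi=-\rho^{-1}\nabla U$, I would insert $X=Z=\gamma'$ into the standard conformal transformation law
\[
\nabla^G_X Z=\nabla_X Z+d\phi(X)\,Z+d\phi(Z)\,X-\langle X,Z\rangle_g\,\grad_g\phi.
\]
The term $\nabla_{\gamma'}\gamma'$ is unwound from $\gamma'=\rho^{-1}\dot\sigma$ via the product rule along the curve together with the $\MP$-geodesic equation $\nabla_{\dot\sigma}\dot\sigma=Y(\dot\sigma)-\nabla U$, also using $\dot\rho=-2\,dU(\dot\sigma)$. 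After substituting and again invoking $|\dot\sigma|_g^2=\rho$, I expect the whole expression to collapse to $\rho^{-2}Y(\dot\sigma)$, which by the second preliminary identity is exactly $\rho^{-1}Y(\gamma')=Y^G(\gamma')$, as required.

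The one point requiring care is the bookkeeping of cancellations. The reparametrization produces a term proportional to $\langle\nabla U,\dot\sigma\rangle\,\dot\sigma$ that must cancel exactly against the $d\phi(\gamma')\gamma'$ contribution, while the $-\nabla U$ coming from the $\MP$-geodesic equation must cancel against the $-\langle\gamma',\gamma'\rangle_g\,\grad_g\phi$ term. Both cancellations hinge on the energy identity $|\dot\sigma|_g^2=\rho$ and on the precise conformal factor $\rho=2(k-U)$; this is the exact sense in which Maupertuis' principle forces this choice of metric and matches energy $k$ of the $\MP$-system with unit speed for $(G,\alpha)$.
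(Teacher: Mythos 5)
Your proof is correct and is essentially the paper's second (alternative) proof of Theorem \ref{MP system is magnetic system}: a direct computation of $\nabla^G_{\dot\gamma}\dot\gamma$ via the conformal transformation of the Levi-Civita connection for $G=2(k-U)g$, unwinding the reparametrization with the $\MP$-geodesic equation and the energy identity $|\dot\sigma|_g^2=2(k-U)$, and identifying $Y_G=\frac{1}{2(k-U)}Y$. The cancellations you flag are exactly the ones that occur in the paper's computation; the paper's first proof instead argues via Maupertuis' principle and the Euler--Lagrange equation for $L(x,\xi)=|\xi|_G-\alpha_x(\xi)$, but your route is equivalent in content.
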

\begin{proof}
It is immediate to check that $\gamma$ has unit speed with respect to $G$. Let $\rho$ denote the arclength of the metric $g$. Since we fix the energy to be $k$, the parameter $t$ of $\sigma$ must be proportional to the length, i.e. $dt=d\rho/\sqrt{2(k-U)}$. We denote by $\dot{\gamma}$ the derivative of $\gamma$ with respect to $s$ and by $\dot\sigma$ the derivative of $\sigma$ with respect to $t$. By the Maupertuis' principle, the $\MP$-geodesic is an extremal of the action
\begin{align*}
\int_\sigma \frac{\p L}{\p \xi^i}(\sigma,\dot{\sigma})\dot\sigma^i\,dt&
=\int_\sigma \sqrt{2(k-U(\sigma))}\,d\rho-\int_\sigma\alpha\left(\sigma,\frac{d\sigma}{d\rho}\right)\,d\rho\\
&=\int_{\gamma} \,ds-\int_{\gamma}\alpha(\gamma,\dot\gamma)\,ds.
\end{align*}
Hence the Lagrangian $L(x,\xi)=|\xi|_{G(x)}-\alpha_x(\xi)$ satisfies the Euler-Lagrange equation with respect to $s$ which has the form
$$
\frac{d}{ds}\left(\frac{G_{ki}\dot\gamma^i}{|\dot\gamma|_{G}}-\alpha_k\right)=
\frac{1}{2}\frac{1}{|\dot\gamma|_{G}}\de{G_{ij}}{x^k}\dot\gamma^i \dot\gamma^j-\de{\alpha_i}{x^k}\dot\gamma^i.
$$
Since $s$ is the arclength of $G$, for which $|\dot\gamma|_{G}=1$, this equation takes the form
$$
\frac{d}{ds}\left(G_{ki}\dot\gamma^i-\alpha_k\right)=
\frac{1}{2}\de{G_{ij}}{x^k}\dot\gamma^i \dot\gamma^j-\de{\alpha_i}{x^k}\dot\gamma^i.
$$
Taking the derivative with respect to $s$ and multiplying by $G^{mk}$ we have
$$
\ddot\gamma^m+G^{mk}\left(\de{G_{ki}}{x^j}+\frac12\de{G_{ij}}{x^k}\right)\dot\gamma^i\dot\gamma^j=G^{mk}\left(\de{\alpha_k}{x^i}-\de{\alpha_i}{x^k}\right)\dot\gamma^i,
$$
which is the equation of magnetic geodesics of the magnetic system $(G,\alpha)$.
\end{proof}

We give an alternative proof of Theorem \ref{MP system is magnetic system} based on the flow equation itself.

\begin{proof}
Given an $\MP$-geodesic $\s(t)$ with energy $k$ and a positive smooth function $\phi$, let $G=\phi g$, $ds=\sqrt{2\phi(k-U)}dt$, so $s$ will be the arclength of $\g(s)=\s(t(s))$ under the metric $G$. If we denote the Christoffel symbols and the covariant derivative under the new metric $G$ by $\widetilde{\Gamma}^i_{jk}$ and $\widetilde{D}$ respectively, then
$$\widetilde{\Gamma}^i_{jk}=\Gamma^i_{jk}+\frac{1}{2}\phi^{-1}(\delta^i_k\frac{\p \phi}{\p x^j}+\delta^i_j\frac{\p \phi}{\p x^k}-\phi^{,i}g_{jk}).$$
So 
$$\frac{\widetilde{D}\dot\gamma}{ds}=\ddot\gamma^i\frac{\p}{\p x^i}+\dot\g^j\dot\g^k\widetilde{\Gamma}^i_{jk}\frac{\p}{\p x^i}$$
$$
=\ddot\s^i\(\frac{dt}{ds}\)^2\frac{\p}{\p x^i}+\dot\s^i\frac{d^2t}{ds^2}\pxi+(\frac{dt}{ds})^2\dot\s^j\dot\s^k(\Gamma^i_{jk}+\frac{1}{2}\phi^{-1}(\delta^i_k\frac{\p \phi}{\p x^j}+\delta^i_j\frac{\p \phi}{\p x^k}-\phi^{,i}g_{jk}))\pxi$$
$$=(\frac{dt}{ds})^2\frac{D\dot\s}{dt}+(\frac{dt}{ds})^2(\phi^{-1}\<\nabla\phi,\dot\s\>_g\dot\s-\frac{1}{2}\phi^{-1}|\dot\s|_g^2\nabla\phi)+\frac{d^2t}{ds^2}\dot\s$$
Here $\frac{d^2t}{ds^2}=-\frac{1}{2}\frac{1}{(2\phi(k-U))^2}\frac{d(2\phi(k-U))}{dt}=-\frac{1}{2}(\frac{dt}{ds})^2\frac{1}{2\phi(k-U)}\frac{d(2\phi(k-U))}{dt}$\\
$$=-(\frac{dt}{ds})^2\{\frac{1}{2}\phi^{-1}\<\nabla\phi,\dot\s\>_g+\frac{1}{2(k-U)}\<\nabla(k-U),\dot\s\>_g\},$$ 
thus
$$\frac{\widetilde{D}\dot\g}{ds}=(\frac{dt}{ds})^2\{Y(\dot\s)-\nabla U(\s)+\phi^{-1}\<\nabla\phi,\dot\s\>_g\dot\s-\phi^{-1}(k-U)\nabla\phi$$
$$-(\frac{1}{2}\phi^{-1}\<\nabla\phi,\dot\s\>_g+\frac{1}{2(k-U)}\<\nabla(k-U),\dot\s\>_g)\dot\s\}.$$
Now we let $\phi=2(k-U)$, so $ds=2(k-U)dt$ (Actually all $\phi=c(k-U), c\in \R^+$ will work for our argument), we get 
$$\frac{\widetilde{D}\dot\g}{ds}=(\frac{dt}{ds})^2 Y(\dot\s)=\frac{dt}{ds} Y(\dot\g).$$
This indeed gives us the magnetic flow with the Lorentz force $Y_G=\frac{1}{2(k-U)} Y$. Moreover, one can see that the magnetic potential $\widetilde{\alpha}$ associated to $(G, Y_G)$ is $\alpha$ too, i.e. the new magnetic system is $(G, \alpha)$.
\end{proof}

\subsection{Simplicities of two systems}The next result says that the simplicity of $(g,\alpha,U)$ implies the simplicity of $(G,\alpha)$, and vice versa. A simple magnetic system is a special case of simple $\MP$-systems by assuming the potential $U=0$ and the energy $k=\frac{1}{2}$, see\cite{DPSU} for more details. 
\begin{Proposition}\label{equivalence of simplicities}
The $\MP$-system $(g,\alpha,U)$ on $M$ (of energy $k$) is simple if and only if so is the magnetic system $(G,\alpha)$ (of energy $\frac{1}{2}$).
\end{Proposition}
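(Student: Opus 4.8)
The plan is to verify the two defining ingredients of simplicity separately---strict convexity of $\p M$ and the diffeomorphism property of the $\MP$-exponential map---using as the single main tool Theorem~\ref{MP system is magnetic system}, which says that the energy-$k$ $\MP$-geodesics of $(g,\alpha,U)$ and the unit-speed magnetic geodesics of $(G,\alpha)$ are \emph{the same curves}, related only by the strictly increasing time change $s(t)=\int_0^t 2(k-U(\s))\,dt$. Throughout I will use the fiberwise radial scaling $\xi\mapsto\eta:=\xi/|\xi|_G$; since $|\xi|_G=2(k-U(x))$ on $S^k_xM$, this is a bundle diffeomorphism of $S^kM$ onto the $G$-unit sphere bundle $S^{1/2}_GM$ (the energy-$\tfrac12$ bundle for $(G,\alpha)$) and restricts to a bijection $S^k(\p M)\to S^{1/2}_G(\p M)$.

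For the convexity condition I would show that \eqref{strict-conv} is nothing but the second-order condition for the tangent geodesics to exit $M$, a property of the \emph{unparametrized} curves. Fix a boundary defining function $\rho$ with $\rho>0$ in the interior and $\grad\rho=\nu$ on $\p M$, and take $(x,\xi)\in S^k(\p M)$. Since $\xi$ is tangent to $\p M$ we have $\frac{d}{dt}(\rho\circ\s)|_0=0$, and differentiating once more along the $\MP$-geodesic $\s$ with $\s(0)=x$, $\dot\s(0)=\xi$ and substituting \eqref{MP} gives
\[
\frac{d^2}{dt^2}(\rho\circ\s)\Big|_{t=0}=\hess\rho(\xi,\xi)+\langle Y_x(\xi),\nu\rangle-d_xU(\nu).
\]
Since the second fundamental form satisfies $\Lambda(x,\xi)=-\hess\rho(\xi,\xi)$ on $\p M$, condition \eqref{strict-conv} is exactly $\frac{d^2}{dt^2}(\rho\circ\s)|_0<0$, i.e.\ $t=0$ is a strict local maximum of $\rho\circ\s$ and the tangent $\MP$-geodesic leaves $M$ near $x$. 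The same computation for the unit-speed magnetic geodesic $\g$ of $(G,\alpha)$ (the $U=0$ instance of the identity, with the $G$-data in place of the $g$-data) identifies strict magnetic convexity of $(G,\alpha)$ with $\frac{d^2}{ds^2}(\rho\circ\g)|_0<0$. But $\g$ and $\s$ are the same curve and $\rho\circ\s=(\rho\circ\g)\circ s$ with $s'(0)=2(k-U(x))>0$, so the two second derivatives differ by the positive factor $s'(0)^2$ and have the same sign. Hence \eqref{strict-conv} for $(g,\alpha,U)$ holds iff its magnetic analogue for $(G,\alpha)$ does, with no need to match the $\hess$, $Y$ and $U$ terms individually.

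For the exponential maps I would produce an explicit radial reparametrization intertwining them. For $v=t\xi$ with $\xi\in S^k_xM$ set $R_x(v)=s(t,\xi)\,\eta$, where $\eta=\xi/|\xi|_G$ and $s(t,\xi)$ is the time change along $\s_{x,\xi}$; then by Theorem~\ref{MP system is magnetic system},
\[
\exp^{\MP}_x(v)=\s_{x,\xi}(t)=\g_{x,\eta}(s(t,\xi))=\exp^{G}_x(R_x(v)).
\]
Because $t\mapsto s$ is a smooth strictly increasing diffeomorphism for each fixed $\xi$, and $R_x$ equals the identity to first order at the origin, $R_x$ is a homeomorphism of $T_xM$, smooth off $0$, carrying $(\exp^{\MP}_x)^{-1}(M)$ onto $(\exp^{G}_x)^{-1}(M)$; consequently $\exp^{\MP}_x$ is a (globally $C^1$, smooth off $0$) diffeomorphism onto $M$ if and only if $\exp^{G}_x$ is. Running this for every $x\in M$, together with the convexity step, yields both implications of the Proposition.

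The step I expect to require the most care is the exponential-map equivalence at the center of the fiber: the maps $\exp^{\MP}_x$ and $\exp^G_x$ are only $C^1$ at the origin, so I must check that $R_x$ carries the requisite regularity there, and that it genuinely maps one domain onto the other rather than merely intertwining the two maps where both are defined. The convexity step, by contrast, is essentially forced once the ``tangent geodesics exit'' reformulation is in place; the only thing to confirm carefully there is the sign convention relating $\hess\rho$ to $\Lambda$, so that \eqref{strict-conv} is reproduced with the correct signs on the $Y$ and $dU$ terms (the Euclidean ball, whose tangent lines leave the ball and for which $\Lambda>0$, fixes the convention).
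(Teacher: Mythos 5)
Your proof is correct, but it reaches the convexity half of the statement by a genuinely different route than the paper. The paper argues by direct computation under the conformal change $G=2(k-U)g$: it derives the transformation law \eqref{main equation in equivalence of simplicities proposition 1} for the second fundamental form $\Lambda_G$ and the rescaling \eqref{main equation in equivalence of simplicities proposition 2} of the Lorentz force, substitutes $\xi=2(k-U)v$, and checks that the inequality \eqref{strict-conv} transforms term by term into strict magnetic convexity for $(G,\alpha)$. You instead observe that \eqref{strict-conv} is precisely the sign condition $\tfrac{d^2}{dt^2}(\rho\circ\s)\big|_0<0$ at a boundary tangency (via the identity $\tfrac{d^2}{dt^2}(\rho\circ\s)|_0=-\Lambda(x,\xi)+\langle Y_x(\xi),\nu\rangle-d_xU(\nu)$, which also underlies Lemma~\ref{convexity} in the appendix), and that this sign is invariant under orientation-preserving reparametrization because the first derivative vanishes at the tangency; since the two systems share their trajectories, the equivalence is immediate and requires no matching of individual terms. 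This is cleaner and more conceptual; the one point you should make explicit is that your $\rho$ satisfies $\grad\rho=\nu$ for $g$, whereas reading off $-\Lambda_G(x,v)+\langle Y_G(v),n\rangle_G$ from the second derivative requires the $G$-normalized defining function $\sqrt{2(k-U)}\,\rho$ --- a positive multiple of $\rho$, which at a tangency changes the second derivative only by a positive factor, so the sign argument is unaffected. For the exponential maps the paper contents itself with the one-line remark that the trajectories coincide; your explicit intertwining $\exp^{\MP}_x=\exp^{G}_x\circ R_x$, with $dR_x(0)=\mathrm{id}$ and $R_x$ smooth off the origin, is a fleshed-out version of the same idea and correctly isolates the only delicate point, namely the $C^1$ regularity at the center of the fiber.
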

\begin{proof}
Since the trajectories of these two systems coincide, for every $x\in M$ the $\MP$-exponential map $\exp_x^\MP:(\exp_x^\MP)^{-1}(M)\to M$ is a diffeomorphism if and only if the magnetic exponential map $\exp_x^\mu:(\exp_x^\mu)^{-1}(M)\to M$ is a diffeomorphism (The definition of $\exp_x^\mu$ is similar to $\exp_x^\MP$ by replacing the $\MP$-flow with a magnetic flow of energy $\frac{1}{2}$). Hence, it is sufficient to prove that $\p M$ is strictly $\MP$-convex if and only if it is strictly magnetic convex with respect to $(G,\alpha)$.

First, we introduce some notations. The inward unit vector normal to $\p M$ with respect to the metric $G$ is indicated as $n$, thus $n=(2(k-U))^{-\frac{1}{2}}\nu$ ($G$ is conformal to $g$). The unit sphere bundle of the metric $G$ is denoted by $SM$. By $\Lambda_G$ we denote the second fundamental form of $\p M$ with respect to metric $G$. From the definition of the second fundamental form and using the formula for connection of $G$ in terms of connection of $g$, we obtain the following formula:
\begin{equation}\label{main equation in equivalence of simplicities proposition 1}
\Lambda_G(x,\xi)=\sqrt{2(k-U(x))}\Lambda(x,\xi)+\frac{d_xU(\nu)}{\sqrt{2(k-U(x))}}|\xi|^2,\quad x\in\p M, \xi\in T_x\p M.
\end{equation}
The Lorentz force of the magnetic field $d\alpha$ with respect to the metric $G$ is indicated as $Y_G$. The next formula is obvious,
\begin{equation}\label{main equation in equivalence of simplicities proposition 2}
\langle Y_G(\xi),n\rangle_{G(x)}=\frac{1}{\sqrt{2(k-U(x))}}\langle Y(\xi),\nu \rangle,\quad x\in\p M, \xi\in T_x\p M.
\end{equation}

Now, suppose that $\p M$ is strictly $\MP$-convex. Take any $x\in \p M$ and $v\in S_x(\p M)$ for the metric $G$. We substitute the vector $\xi=2(k-U)v\in S^k_x(\p M)$ for the metric $g$ in formulas (\ref{main equation in equivalence of simplicities proposition 1}--\ref{main equation in equivalence of simplicities proposition 2}) to obtain
\begin{align*}
&\Lambda_G(x,v)=(2(k-U(x)))^{-\frac{3}{2}}(\Lambda(x,\xi)+d_xU(\nu)),\\
&\langle Y_G(v),n\rangle_{G(x)}=(2(k-U(x)))^{-\frac{3}{2}}\langle Y(\xi),\nu \rangle,
\end{align*}
which implies that $\p M$ is strictly magnetic convex with respect to $(G,\alpha)$ by \eqref{strict-conv}.

By similar arguments one can show that $\p M$ is strictly $\MP$-convex whenever it is strictly magnetic convex with respect to $(G,\alpha)$.
\end{proof}

\subsection{Boundary action functions of the two systems}
Here we show that the boundary action functions of the two simple systems $(g,\alpha,U)$ and $(G,\alpha)$ coincide. Assuming the potential $U=0$ and the energy $k=\frac{1}{2}$, the corresponding boundary action function of \eqref{Axy} is the one for a simple magnetic system.
\begin{Proposition}\label{A=A_G}
Let $\mathbb A$ be the Ma\~n\'e's action potential (of energy $k$) for a simple $\MP$-system $(g,\alpha, U)$ and $\mathbb A_G$ be the Ma\~n\'e's action potential (of energy $1/2$) for the simple magnetic system $(G,\alpha)$, then $\mathbb A|_{\p M\times\p M}=\mathbb A_G|_{\p M\times\p M}$.
\end{Proposition}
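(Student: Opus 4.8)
The plan is to evaluate both Ma\~n\'e action potentials along the \emph{same} geometric curve and to track how the time reparametrization of Theorem~\ref{MP system is magnetic system} redistributes the three terms of the time free action.

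First I would fix $x,y\in\p M$ and let $\s=\g_{x,y}:[0,T_{x,y}]\to M$ be the unique $\MP$-geodesic of energy $k$ joining them, which by \eqref{Axy} realizes $\mathbb A(x,y)$. Setting $s(t)=\int_0^t 2(k-U(\s))\,dt'$ and $\g(s)=\s(t(s))$, Theorem~\ref{MP system is magnetic system} identifies $\g$ with the unit speed magnetic geodesic of $(G,\alpha)$ from $x$ to $y$, while Proposition~\ref{equivalence of simplicities} guarantees that $(G,\alpha)$ is simple. Thus $\g$ is the unique minimizer for the magnetic system and realizes $\mathbb A_G(x,y)$, which by the $U=0$, $k=\frac12$ specialization of \eqref{Axy} equals $S_{x,y}-\int_\g\alpha$, with $S_{x,y}$ the total $G$-arclength of $\g$.

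Next I would carry out two elementary computations. Since $s$ is the $G$-arclength parameter,
\[
S_{x,y}=s(T_{x,y})=\int_0^{T_{x,y}}2(k-U(\s))\,dt=2kT_{x,y}-2\int_0^{T_{x,y}}U(\s)\,dt,
\]
and since the line integral of the $1$-form $\alpha$ is invariant under reparametrization, $\int_\g\alpha=\int_\s\alpha$. Recognizing $2\int_0^{T_{x,y}}U(\s)\,dt=\int_{\g_{x,y}}2U$ as exactly the potential term in \eqref{Axy}, I would substitute to obtain
\[
\mathbb A_G(x,y)=S_{x,y}-\int_\g\alpha=2kT_{x,y}-\int_{\g_{x,y}}(\alpha+2U)=\mathbb A(x,y),
\]
and conclude by the arbitrariness of $x,y\in\p M$.

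The main point requiring care is to ensure that the $\MP$-minimizer and the magnetic minimizer are literally the same curve, not merely two trajectories that reparametrize into one another. This is where simplicity of both systems is essential: uniqueness of connecting geodesics, together with Theorem~\ref{MP system is magnetic system} and the fact that the energy is held fixed at $k$ (so that $t\mapsto s$ is a genuine increasing change of variable), forces the unique $\MP$-minimizer to map onto the unique magnetic minimizer. Once this identification is secured, the rest is the bookkeeping displayed above.
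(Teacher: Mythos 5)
Your proposal is correct and follows essentially the same route as the paper: reparametrize the unique $\MP$-geodesic $\sigma$ from $x$ to $y$ by $G$-arclength via Theorem~\ref{MP system is magnetic system}, use the energy identity $2kT_{x,y}-\int_\sigma 2U\,dt=\int_\sigma 2(k-U)\,dt=T_G(x,y)$ together with the reparametrization invariance of $\int\alpha$, and compare the two instances of \eqref{Axy}. Your extra remark that simplicity of $(G,\alpha)$ identifies the reparametrized curve as the unique magnetic minimizer is a point the paper leaves implicit, but it is the same argument.
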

\begin{proof}
Take $x,y\in\p M$ and consider the unique $\MP$-geodesic $\sigma$ from $x$ to $y$. Then Theorem \ref{MP system is magnetic system} implies that $\gamma(s)=\sigma(t(s))$ is a unit speed magnetic geodesic (from $x$ to $y$) of the system $(G,\alpha)$ and $s$ is the arclength of $\gamma$ under the metric $G$. Thus,
\begin{multline*}
\mathbb A(x,y)=2kT(x,y)-\int_{\sigma}(\alpha(\sigma,\dot{\sigma})+2U(\sigma))\,dt\\
=\int_{\sigma}2(k-U(\sigma))\,dt-\int_{\sigma}\alpha(\sigma,\dot{\sigma})\,dt
=\int_{\gamma}\,ds-\int_{\gamma}\alpha(\gamma,\dot{\gamma})\,ds\\
=T_G(x,y)-\int_{\gamma}\alpha(\gamma,\dot{\gamma})\,ds=\mathbb A_G(x,y).
\end{multline*}
We are done.
\end{proof}


\section{Counterexamples}

Before moving to the detailed study of the boundary and scattering rigidity problems of simple $\MP$-systems, we provide some counterexamples which show that knowing the boundary action function for only one energy level is insufficient for solving the boundary rigidity problem, even under the assumption that we know the restriction of the system on the boundary $\p M$. More precisely, there are simple $\MP$-systems $(g,\alpha, U)$ and $(g',\alpha',U')$ with the same boundary action function for some energy level $k$, whose restrictions onto the boundary are the same (i.e. $g|_{\p M}=g'|_{\p M}, \alpha|_{\p M}=\alpha'|_{\p M}, U|_{\p M}=U'|_{\p M}$), but are not gauge equivalent. This makes one turn to considering boundary action functions of two different energy levels.\\

\noindent {\bf Counterexamples:} Given some simple magnetic system $(g, \alpha)$ on a compact manifold $M$ with boundary, we define two $\MP$-systems $(\frac{1}{4}g,\alpha, U_1)$ and $(\frac{1}{2}g,\alpha, U_2)$, where $U_1\equiv 1$ on $M$ and $U_2\equiv 2$ on $M$. We fix the energy $k=3$, then it is easy to see that these two $\MP$-systems reduce to the same magnetic system $(g, \alpha)$. Since $(g,\alpha)$ is simple, Proposition \ref{equivalence of simplicities} implies that both $(\frac{1}{4}g,\alpha, U_1)$ and $(\frac{1}{2}g,\alpha, U_2)$ are simple $\MP$-systems. Moreover, appling Proposition \ref{A=A_G}, we conclude that they have the same boundary action function for the energy $k=3$. Obviously these two $\MP$-systems are not gauge equivalent, since the metrics $\frac{1}{4}g$ and $\frac{1}{2}g$, potentials $U_1$ and $U_2$ are even not equal on the boundary $\p M$.

Next, by modifying the two $\MP$-systems near the boundary, we can make them equal on the boundary. Let $\varphi$ and $\psi$ be two smooth functions on $M$, and $\varphi\equiv\psi\equiv 1$ for points away from a small tubular neighborhood of the boundary $\p M$. We assume $1\leq \varphi<\frac{3}{2}$ in the interior of $M$ and $\varphi=\frac{3}{2}$ on $\p M$; $\frac{3}{4}<\psi\leq 1$ in the interior of $M$ and $\psi=\frac{3}{4}$ on $\p M$. Then $\varphi=\varphi U_1<\frac{3}{2}<\psi U_2=2\psi$ in the interior of $M$. We define $\tilde g=\frac{1}{2(3-\varphi)}g$ and $\tilde g'=\frac{1}{2(3-2\psi)}g$. Then it is easy to check that the $\MP$-systems $(\tilde g,\alpha, \varphi)$ and $(\tilde g',\alpha, 2\psi)$ reduce to the same magnetic system $(g, \alpha)$ for the energy $k=3$. Applying Proposition \ref{equivalence of simplicities} and \ref{A=A_G} again, these two $\MP$-systems $(\tilde g,\alpha, \varphi)$ and $(\tilde g',\alpha, 2\psi)$ are simple with the same boundary action function. Moreover, $\tilde g|_{\p M}=\tilde g'|_{\p M}=\frac{1}{3}g, \varphi|_{\p M}=2\psi|_{\p M}=\frac{3}{2}$, i.e. these two systems are equal on the boundary. However, they are still not gauge equivalent, there is no diffeomorphism $f: M\rightarrow M$ such that $2\psi=\varphi\circ f$ (since $\varphi<\frac{3}{2}<2\psi$ in the interior of $M$).


\section{Boundary action function and scattering relation}
\subsection{Boundary determination}
Here we show that up to gauge equivalence the boundary action functions of two different energy levels completely determine the Riemannian metric, magnetic potential and potetial on the boundary of the manifold under study. As mentioned in the Section 3, the boundary action function of one energy level is insufficient for determining the restriction of the system on the boundary.

\begin{Lemma}\label{bnd-0}
If $(g,\alpha,U)$ and $(g',\alpha',U')$ are simple $\MP$-systems
on $M$ with the same boundary action functions for both energy $k_1$ and $k_2$ ,
then
\begin{equation}\label{bnd-data-0}
\imath^*g=\imath^*g',
\quad
\imath^*\alpha=\imath^*\alpha',
\quad
U\circ \imath=U'\circ \imath,
\end{equation}
where $\imath:\partial M\to M$ is the embedding map.
\end{Lemma}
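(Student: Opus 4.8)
The plan is to reduce the statement to the already-available boundary determination for simple magnetic systems, and then to exploit the \emph{two} energy levels $k_1\neq k_2$ to disentangle the conformal factor from the potential. Everything will be done pointwise on $\p M$, so fix an arbitrary $p\in\p M$.

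For each energy level $k_j$ ($j=1,2$), Theorem \ref{MP system is magnetic system} together with Proposition \ref{equivalence of simplicities} reduces $(g,\alpha,U)$ to the simple magnetic system $(G_j,\alpha)$ with $G_j=2(k_j-U)g$ of energy $1/2$, and likewise reduces $(g',\alpha',U')$ to $(G_j',\alpha')$ with $G_j'=2(k_j-U')g'$. Since by hypothesis the two $\MP$-systems share the same boundary action function at energy $k_j$, Proposition \ref{A=A_G} gives $\mathbb A_{G_j}|_{\p M\times\p M}=\mathbb A_{G_j'}|_{\p M\times\p M}$. I then invoke the boundary determination for simple magnetic systems (the magnetic analogue of the present lemma, from \cite{DPSU}): equality of the magnetic boundary action functions forces the tangential restrictions to agree, that is $\imath^*G_j=\imath^*G_j'$ and $\imath^*\alpha=\imath^*\alpha'$. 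The second equality is already the desired conclusion for the magnetic potential, and it is independent of $j$. Writing $\imath^*G_j=2(k_j-U\circ\imath)\,\imath^*g$ and similarly for the primed system, the first equality becomes
\begin{equation*}
(k_j-U\circ\imath)\,\imath^*g=(k_j-U'\circ\imath)\,\imath^*g',\qquad j=1,2,
\end{equation*}
an identity of positive-definite symmetric $2$-tensors on $\p M$ (positivity and $k_j-U>0$, $k_j-U'>0$ follow from $k_j>\sup U,\ \sup U'$).

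The last step is the disentanglement. At the fixed point $p$ set $h=\imath^*g|_p$, $h'=\imath^*g'|_p$ (both nondegenerate), and $a_j=k_j-U(p)>0$, $b_j=k_j-U'(p)>0$. The two tensor identities read $a_j h=b_j h'$, so $h'=(a_1/b_1)h=(a_2/b_2)h$; since $h\neq0$ this forces $a_1 b_2=a_2 b_1$. Expanding $(k_1-U)(k_2-U')=(k_2-U)(k_1-U')$ collapses to $(k_2-k_1)(U'-U)=0$, and as $k_1\neq k_2$ we get $U(p)=U'(p)$. Then $a_1=b_1$ yields $h=h'$. Since $p$ was arbitrary, this gives $U\circ\imath=U'\circ\imath$ and $\imath^*g=\imath^*g'$, which together with $\imath^*\alpha=\imath^*\alpha'$ completes the proof.

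The step I expect to require the most care is the magnetic boundary determination. If it cannot be quoted verbatim in the tangential form above, I would instead establish it directly from the short-geodesic asymptotics of $\mathbb A_{G_j}$: for boundary points $x,y$ close together the minimizing magnetic geodesic is short, so the symmetric part $\tfrac12(\mathbb A_{G_j}(x,y)+\mathbb A_{G_j}(y,x))$ recovers the induced length element, hence $\imath^*G_j$, to leading order, while the antisymmetric part recovers $\imath^*\alpha$. The one genuinely new feature compared with the purely Riemannian distance is controlling the $O(d_{G_j}(x,y))$ contribution of the magnetic term $\int_\gamma\alpha$, which is handled by Taylor expanding $\alpha$ along the short geodesic. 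I emphasize that a single energy level is intrinsically insufficient here: it determines only the product $2(k-U\circ\imath)\,\imath^*g$, exactly the ambiguity exhibited by the counterexamples in Section 3, and it is precisely the second energy level that breaks it.
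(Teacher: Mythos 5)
Your proposal is correct and follows essentially the same route as the paper: reduce to the magnetic systems $(G_j,\alpha)=(2(k_j-U)g,\alpha)$ via Theorem \ref{MP system is magnetic system}, Proposition \ref{equivalence of simplicities} and Proposition \ref{A=A_G}, obtain $\imath^*G_j=\imath^*G_j'$ and $\imath^*\alpha=\imath^*\alpha'$ from the boundary action function (the paper does this inline via the limit $\lim_{s\to0}\mathbb A(x,\tau(s))/s=|\xi|_{G_j}-\alpha(\xi)$, which is precisely the magnetic boundary determination you cite and also sketch as a fallback), and then use $k_1\neq k_2$ to separate $U$ from $g$. Your pointwise ratio argument $a_1b_2=a_2b_1$ and the paper's subtraction of $(k_j-U)|\xi|_g^2=(k_j-U')|\xi|_{g'}^2$ over $j=1,2$ are the same algebra.
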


\begin{proof}
Given $x\in\partial M$ and $\xi\in T_x(\partial M)$,
let $\tau (s)$, $-\varepsilon <s<\varepsilon $, be a curve on
$\partial M$ with $\tau (0)=x$ and $\dot \tau(0)=\xi$. 
Let $G=2(k_1-U)g$, by Theorem \ref{MP system is magnetic system} and Proposition \ref{equivalence of simplicities}, $(G,\alpha)$ is a simple magnetic system of energy $\frac{1}{2}$. Applying Proposition \ref{A=A_G} it is easy to see that
$$
\lim_{s\to0}\frac{\mathbb A(x,\tau(s))}s=\lim_{s\to 0}\frac{\mathbb A_{G}(x,\tau(s))}{s}=|\xi|_{G}-\alpha(\xi)
=\sqrt{2(k_1-U)}|\xi|_g-\alpha(\xi).
$$

A similar equality holds for the system  $(g',\alpha', U')$.
Therefore,
$$
\sqrt{2(k_1-U)}|\xi|_{g}-\alpha(\xi)=\sqrt{2(k_1-U')}|\xi|_{g'}-\alpha'(\xi).
$$
Changing $\xi$ to $-\xi$, we get
$$
\sqrt{2(k_1-U)}|\xi|_{g}+\alpha(\xi)=\sqrt{2(k_1-U')}|\xi|_{g'}+\alpha'(\xi),
$$
whence we infer the second equation in \eqref{bnd-data-0}. Notice that we also get that 
$$(k_1-U)|\xi|_g^2=(k_1-U')|\xi|_{g'}^2.$$
Similarly, for energy $k_2$, we obtain
$$(k_2-U)|\xi|_g^2=(k_2-U')|\xi|_{g'}^2.$$
Since $k_1\neq k_2$, by taking the difference of above two equations, we have $|\xi|_g=|\xi|_{g'}$, thus $\imath^*g=\imath^*g'$. This also implies that $U\circ \imath=U'\circ \imath$.
\end{proof}

Now we prove that the boundary action functions of two different energy levels actually determine the full jets of the metric $g$, magnetic potential $\alpha$ and potential function $U$ on the boundary.
 
\begin{Lemma}\label{jet}
If $(g,\alpha,U)$ and $(g',\alpha',U')$ are simple $\MP$-systems on $M$ with the same boundary action functions for both energy $k_1$ and $k_2$, then $(g',\alpha',U')$ is gauge equivalent to some simple $\MP$-system $(\bar{g},\bar{\alpha},\bar{U})$ such that in any local coordinate system we have $\p^m g|_{\p M}=\p^m \bar{g}|_{\p M}, \p^m \alpha|_{\p M}=\p^m \bar{\alpha}|_{\p M}$ and $\p^m U|_{\p M}=\p^m \bar{U}|_{\p M}$ for every multi-index $m$.
\end{Lemma}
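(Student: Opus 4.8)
The plan is to bootstrap from Lemma~\ref{bnd-0}, which already gives the matching of the $0$-jets, and to promote it to all orders by combining the Maupertuis reduction with the known boundary jet-determination for simple magnetic systems from \cite{DPSU}. The organizing principle is that the two energies must be used together: at a single energy the boundary action function only sees the reduced magnetic metric $G_i=2(k_i-U)g$, which entangles $g$ and $U$, and it is precisely the hypothesis $k_1\neq k_2$ that lets us disentangle them.

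First I would record the following reduction of the gauge problem. If an $\MP$-gauge $(f,\varphi)$ is applied to $(g',\alpha',U')$, producing $\tilde g=f^*g'$, $\tilde\alpha=f^*\alpha'+d\varphi$, $\tilde U=U'\circ f$, then for each energy $k_i$ the associated reduced magnetic metric transforms by $2(k_i-\tilde U)\tilde g=f^*\big(2(k_i-U')g'\big)=f^*G_i'$; that is, a single $\MP$-gauge $(f,\varphi)$ induces the \emph{same} magnetic gauge $(f,\varphi)$ on both reduced systems at once. By Theorem~\ref{MP system is magnetic system}, Proposition~\ref{equivalence of simplicities} and Proposition~\ref{A=A_G}, the hypothesis says that the magnetic systems $(G_i,\alpha)$ and $(G_i',\alpha')$ have equal boundary action functions for $i=1,2$. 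Hence it suffices to produce one diffeomorphism $f$, fixing $\p M$, with $f^*G_1'\equiv G_1$ and $f^*G_2'\equiv G_2$ to infinite order on $\p M$, together with a $\varphi$ vanishing on $\p M$ with $f^*\alpha'+d\varphi\equiv\alpha$; indeed, subtracting the two relations gives $2(k_1-k_2)\,\tilde g\equiv 2(k_1-k_2)\,g$, so $\tilde g\equiv g$, and then either relation forces $\tilde U\equiv U$ to infinite order.

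To construct $f$ I would work in boundary normal (semigeodesic) coordinates. Fix such coordinates for $g$, choose $f$ so that $\tilde g=f^*g'$ is also in semigeodesic form with the same coordinates --- possible, with $f|_{\p M}=\mathrm{id}$, since $\imath^*g=\imath^*g'$ by Lemma~\ref{bnd-0} --- and normalize $\varphi$ so that the normal component of $\tilde\alpha$ is fixed. Then I would run an induction on the jet order $m$. At each step I use the near-diagonal asymptotics of the magnetic boundary action function $\mathbb A_{G_i}(x,y)$ for $x,y\in\p M$ close together, exactly as in the Riemannian and magnetic boundary determination of \cite{DPSU}: its part even under $x\leftrightarrow y$ reproduces the $G_i$-distance and thereby the symmetric (metric) jets, while its odd part reproduces $\int\alpha$ and thereby the jets of $\alpha$ up to the exact-form ambiguity absorbed by $d\varphi$. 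Feeding in the already-matched lower-order jets and reading off the order-$m$ constraint at both energies $k_1,k_2$ yields two independent relations, which by $k_1\neq k_2$ determine $\p^m g$ and $\p^m U$ separately and fix the $m$-jets of $f$ and $\varphi$; the semigeodesic normalization removes the residual diffeomorphism freedom at each order.

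The main obstacle is exactly this coordinate bookkeeping: the boundary action function at energy $k_i$ canonically controls the metric jets only in the boundary normal coordinates of $G_i$, and these coordinate systems differ for $G_1$, $G_2$ and $g$, being conformally related by the factors $2(k_i-U)$. One cannot therefore simply quote the single-energy magnetic lemma twice and hope the two diffeomorphisms coincide. The resolution is inductive: the discrepancy between the normal coordinates of $G_i$ and of $g$ at order $m$ depends only on jets of $U$ of order $<m$, which are already matched by the inductive hypothesis, so the order-$m$ adjustment of $f$ needed to put $\tilde g$ into semigeodesic form is consistent for both energies simultaneously. Verifying this compatibility --- that the two energy constraints can be solved by a common $m$-jet of $(f,\varphi)$ --- is the technical heart of the argument.
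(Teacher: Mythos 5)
Your overall architecture coincides with the paper's: reduce to the magnetic systems $(G_i,\alpha)=(2(k_i-U)g,\alpha)$ at both energies via Theorem~\ref{MP system is magnetic system} and Propositions~\ref{equivalence of simplicities} and \ref{A=A_G}, normalize with a single gauge $f=\exp_{\p M}\circ(\exp'_{\p M})^{-1}$ built from $g$ and $g'$ (the paper uses exactly this $f$, noting it is energy-independent), and then exploit $k_1\neq k_2$ by subtracting the two jet relations $2(k_i-U)g\equiv 2(k_i-\bar U)\bar g$ to decouple $g$ from $U$. Where you diverge is in how the jet agreement of the reduced metrics is obtained. The paper does not re-derive it from near-diagonal asymptotics of $\mathbb A_{G_i}$; it invokes \cite[Theorem 2.2]{DPSU} as a black box in its coordinate-invariant form: after a gauge, $\p^m G_i|_{\p M}=\p^m \bar{G_i}|_{\p M}$ \emph{in any local coordinate system}. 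That formulation makes the coordinate bookkeeping you worry about evaporate: once the jets agree in every coordinate system, one simply compares them in the boundary normal coordinates of $g$ and runs an elementary componentwise induction on the order of the normal derivative. The only point left to check is that the gauge diffeomorphism supplied by \cite{DPSU} can be taken the same for $i=1,2$, which the paper settles by the normalization above, rather than by an interleaved induction.

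The gap in your version is precisely the step you label the technical heart and do not carry out, and the claim on which your proposed resolution rests is doubtful as stated. You assert that the discrepancy between $G_i$-boundary-normal coordinates and $g$-boundary-normal coordinates at order $m$ involves only jets of $U$ of order $<m$. But the $(m+1)$-st Taylor coefficient of the $G_i$-normal geodesic issued from a boundary point already contains $\p^m U$ (the geodesic equation of the conformal metric involves $\nabla U$), and the order-$m$ normal derivatives of the metric components in the transformed coordinates involve the $(m+1)$-jet of the coordinate change; so order-$m$ jets of $U$ do enter the comparison at step $m$, and your induction does not close without a further cancellation argument showing these contributions match given the inductive hypothesis. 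You would need either to supply that cancellation, or, more economically, to sidestep the issue entirely by quoting the coordinate-invariant statement of \cite[Theorem 2.2]{DPSU} at each energy with the common gauge, as the paper does.
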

\begin{proof}
Let $G_i=2(k_i-U)g$ and $G'_i=2(k_i-U')g'$, $i=1,2$ by Theorem \ref{MP system is magnetic system} and Proposition \ref{equivalence of simplicities}, $(G_i,\alpha)$ and $(G'_i,\alpha')$ are simple magnetic systems of energy $\frac{1}{2}$. Let $\mathbb A_{G_i}$ and $\mathbb A_{G'_i}$ denote the Ma\~n\'e's action potentials (of energy $1/2$) for $(G_i,\alpha)$ and $(G'_i,\alpha')$ respectively. Then by Proposition \ref{A=A_G} we have $\mathbb A_{G_i}|_{\p M\times \p M}=\mathbb A_{G'_i}|_{\p M\times \p M}$. Then \cite[Theorem 2.2]{DPSU} implies that there is $(\bar{G_i},\bar{\alpha_i})$, gauge equivalent to $(G'_i,\alpha')$, such that in any local coordinate system $\p^m G_i|_{\p M}=\p^m \bar{G_i}|_{\p M}$ and $\p^m\alpha|_{\p M}=\p^m\bar{\alpha_i}|_{\p M}$ for every multi-index $m$. Thus there is some diffeomorphism $f_i$ with $f_i|_{\p M}=Id$, and some smooth function $\varphi_i$ with $\varphi_i|_{\p M}=0$, such that $\bar{G_i}=f_i^*G'_i=2(k_i-U'\circ f_i)f_i^*g'$ and $\bar{\alpha_i}=f_i^*\alpha'+d\varphi_i$. Actually by Lemma \ref{bnd-0} $U|_{\p M}=U'|_{\p M}$, the proof of \cite[Theorem 2.2]{DPSU} showes that near the boundary $\p M$, one can choose $f_1=f_2=\exp_{\p M}\circ (\exp'_{\p M})^{-1}$ where $\exp_{\p M}$ and $\exp'_{\p M}$ are the ``usual" boundary exponential maps w.r.t. $g$ and $g'$ respectively. Thus $f_1^*g'=f_2^*g',\, U'\circ f_1=U'\circ f_2$ near the boundary. We define $\bar{g}=f_1^*g', \, \bar{\alpha}=\bar{\alpha_1},\, \bar{U}=U'\circ f_1$, by Lemma \ref{bnd-0}, $U|_{\p M}=U'_{\p M}=\bar{U}|_{\p M}$. Thus $G_1|_{\p M}=\bar{G_1}|_{\p M}$ implies $g|_{\p M}=\bar{g}|_{\p M}$. 

Now we prove the equality of derivatives on the boundary by introducing boundary normal coordinates $(x', x^n)$ w.r.t. $g$ near arbitrary $x_0\in\p M$. Since $g|_{\p M}=\bar{g}|_{\p M}$, the same coordinates are boundary normal coordinates w.r.t. $\bar{g}$. Thus locally the metrics are of the form 
$$g=g_{ij}dx'_idx'_j+dx_n^2,$$
$$\bar{g}=\bar{g}_{ij}dx'_idx'_j+dx_n^2,$$
where $i,j$ vary from $1$ to $n-1$. It suffices to prove that the normal derivatives are equal, i.e.
$$\p_n^m g_{ij}|_{x=x_0}=\p_n^m \bar{g}_{ij}|_{x=x_0},\, \, \p_n^m U|_{x=x_0}=\p_n^m \bar{U}|_{x=x_0}\, \,  \forall m=0,1,\cdots; i,j=1,\cdots, n-1.$$
We prove above equalities by induction, the case $m=0$ is granted. Assume for some nonnegative integer $l$ and all $0\leq m\leq l$, $\p_n^m g_{ij}|_{x=x_0}=\p_n^m \bar{g}_{ij}|_{x=x_0},\, \, \p_n^m U|_{x=x_0}=\p_n^m \bar{U}|_{x=x_0}$. Since $\p_n^{l+1}G_1|_{x=x_0}=\p_n^{l+1}\bar{G_1}|_{x=x_0}$, then
\begin{equation}\label{l+1}
(-\p_n^{l+1}U) g_{ij}+(k_1-U)\p_n^{l+1}g_{ij}=(-\p_n^{l+1}\bar{U}) \bar{g}_{ij}+(k_1-\bar{U})\p_n^{l+1}\bar{g}_{ij}
\end{equation}
at $x_0$. Similarly for energy $k_2$, since $\bar{g}=f_2^*g',\, \bar{U}=U'\circ f_2$ near $\p M$, we have at $x_0$ 
\begin{equation}
(-\p_n^{l+1}U) g_{ij}+(k_2-U)\p_n^{l+1}g_{ij}=(-\p_n^{l+1}\bar{U}) \bar{g}_{ij}+(k_2-\bar{U})\p_n^{l+1}\bar{g}_{ij}.
\end{equation}
Taking difference of above two equalities, we arrive
$$(k_1-k_2)\p_n^{l+1}g_{ij}|_{x=x_0}=(k_1-k_2)\p_n^{l+1}\bar{g}_{ij}|_{x=x_0}.$$
Since $k_1\neq k_2$, we obtain $\p_n^{l+1}g_{ij}|_{x=x_0}=\p_n^{l+1}\bar{g}_{ij}|_{x=x_0}$. Now return to the equation \eqref{l+1}, since $g|_{\p M}=\bar{g}|_{\p M}$ is positive definite,$\, U|_{\p M}=\bar{U}|_{\p M}$, we eventually get $\p_n^{l+1} U|_{x=x_0}=\p_n^{l+1} \bar{U}|_{x=x_0}$. This finishes the proof.
\end{proof}

\subsection{Scattering relation}
Now we show that for simple $\MP$-systems, the boundary rigidity problem is equivalent
to the problem of restoring a Riemannian metric, a magnetic potential and a potential from the scattering relations. Thus we will formulate all rigidity results in terms of the boundary rigidity problem in the next Section.

\begin{Theorem}\label{equivalence of two problems}
Suppose that $(g,\alpha,U)$ and $(g',\alpha',U')$ are simple
$\MP$-systems on $M$ of the same energy $k$ such that
$g|_{\p M}=g'|_{\p M}$, $U|_{\p M}=U'|_{\p M}$ and $\imath^* \alpha=\imath^* \alpha'$.
Then the boundary action functions $\mathbb A|_{\p M\times \p M}$
and $\mathbb A'|_{\p M\times \p M}$ of both the systems coincide if and only if
the scattering relations $\mathcal S$
and $\mathcal S'$  of these systems coincide.
\end{Theorem}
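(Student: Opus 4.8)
\emph{The plan} is to build a single bridge between $\mathbb A$ and $\mathcal S$ out of the first variation of Ma\~n\'e's action potential, and then read off both implications from it. Writing the time-free action as $\mathbb A(\g)=\int_0^T\big(\tfrac12|\dot\g|^2-\alpha(\dot\g)-U+k\big)\,dt$, the relevant Lagrangian is $L(x,\xi)=\tfrac12|\xi|^2-\alpha_x(\xi)-U(x)$: a direct check shows its Euler--Lagrange equation is precisely \eqref{MP}, and its conjugate momentum is $p=\xi^\flat-\alpha$. For $x\neq y$ in $\p M$, simplicity produces a unique minimizing $\MP$-geodesic $\g_{x,y}$ with initial velocity $\xi(x,y)=\dot\g_{x,y}(0)$ and terminal velocity $\eta(x,y)=\dot\g_{x,y}(T_{x,y})$, and $\mathbb A$ is smooth off the diagonal. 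The first-variation computation then yields, as covectors on the boundary,
\begin{align*}
d_x\mathbb A&=\big(-\xi(x,y)^\flat+\alpha\big)\big|_{T_x\p M},\\
d_y\mathbb A&=\big(\ \eta(x,y)^\flat-\alpha\big)\big|_{T_y\p M}.
\end{align*}
Establishing this identity carefully --- in particular the vanishing of the contribution of the varying optimal time $T$ (which drops out because the energy equals $k$) together with the off-diagonal smoothness of $\mathbb A$ --- is the step I expect to be the main obstacle; everything after it is formal.

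\emph{The bridge.} Since $\imath^*\alpha=\imath^*\alpha'$ and $g|_{\p M}=g'|_{\p M}$, the displayed formula shows that $d\mathbb A=d\mathbb A'$ off the diagonal if and only if the tangential parts of the connecting velocities agree, i.e. $\xi(x,y)^\top=\xi'(x,y)^\top$ and $\eta(x,y)^\top=\eta'(x,y)^\top$. The full vectors are recovered from their tangential parts identically in both systems: the energy relation $|\xi|^2=2(k-U(x))$ fixes the size of the normal component, while strict $\MP$-convexity together with $\xi$ inward, $\eta$ outward fixes its sign; as $g|_{\p M}=g'|_{\p M}$ and $U|_{\p M}=U'|_{\p M}$, this reconstruction is the same for $(g,\alpha,U)$ and $(g',\alpha',U')$. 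Hence equality of tangential velocities is equivalent to $\xi(x,y)=\xi'(x,y)$ and $\eta(x,y)=\eta'(x,y)$.

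\emph{The two implications.} If $\mathbb A|_{\p M\times\p M}=\mathbb A'|_{\p M\times\p M}$, differentiating and invoking the bridge gives $\xi=\xi'$ and $\eta=\eta'$; for $(x,\zeta)\in\p_+S^kM$ with first-system exit point $y$ we then have $\zeta=\xi(x,y)=\xi'(x,y)$, so the second geodesic from $x$ in direction $\zeta$ also exits at $y$ with velocity $\eta'(x,y)=\eta(x,y)$, that is $\mathcal S(x,\zeta)=\mathcal S'(x,\zeta)$. Conversely, suppose $\mathcal S=\mathcal S'$. For fixed $x$, simplicity makes the exit-point map $\p_+S^k_xM\to\p M$ injective, since each inward direction determines a unique $\MP$-geodesic, hence a unique exit point, via the diffeomorphism $\exp^{\MP}_x$. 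Applying the first system's $\mathcal S$ to the two connecting directions $\xi(x,y)$ and $\xi'(x,y)$ gives exit point $y$ in each case (for the second, use $\mathcal S(x,\xi'(x,y))=\mathcal S'(x,\xi'(x,y))$), so injectivity forces $\xi(x,y)=\xi'(x,y)$, and then $\eta(x,y)=\eta'(x,y)$. By the bridge $d(\mathbb A-\mathbb A')=0$ off the diagonal; since $(\p M\times\p M)\setminus\{\text{diagonal}\}$ is connected and both potentials vanish continuously on the diagonal, $\mathbb A\equiv\mathbb A'$.

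\emph{Alternative route.} One can instead bypass the variational computation: by Theorem~\ref{MP system is magnetic system} and Propositions~\ref{equivalence of simplicities} and \ref{A=A_G}, each system reduces to a simple magnetic system $(G,\alpha)$ of energy $1/2$ with the same boundary action function, and $G|_{\p M}=G'|_{\p M}$ because $g$ and $U$ agree on $\p M$. The $\MP$- and magnetic scattering relations determine one another through the reparametrization $\dot\g=\tfrac{1}{2(k-U)}\dot\sigma$, whose factor is known on $\p M$, so the statement follows from the corresponding equivalence for magnetic systems in \cite{DPSU}.
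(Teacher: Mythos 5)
Your proposal is correct, and your primary route is genuinely different from the paper's. The paper proves this theorem by reducing both systems to simple magnetic systems $(G,\alpha)$, $(G',\alpha')$ of energy $1/2$ via Theorem~\ref{MP system is magnetic system} and Propositions~\ref{equivalence of simplicities} and \ref{A=A_G}, then quoting Lemmas~2.5 and 2.6 of \cite{DPSU} to pass between $\mathbb A_G$ and $\mathcal S_G$, and finally translating $\mathcal S_G\leftrightarrow\mathcal S$ by the explicit reparametrization $\dot\gamma=\tfrac{1}{2(k-U)}\dot\sigma$ --- i.e.\ exactly your ``alternative route.'' Your main argument instead re-proves the equivalence directly from the first variation of the Ma\~n\'e potential, $d_x\mathbb A=(-\xi^\flat+\alpha)|_{T_x\p M}$, $d_y\mathbb A=(\eta^\flat-\alpha)|_{T_y\p M}$, which is in substance what the cited DPSU lemmas do internally for magnetic systems; what it buys is a self-contained proof that makes visible precisely which boundary data ($\imath^*\alpha$, $\imath^*g$, $U|_{\p M}$, and the energy constraint fixing the normal component) are consumed, at the cost of having to establish the variational identity for $\MP$-systems yourself. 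That identity is the one point you assert rather than prove: it is correct (the free-endpoint-time term is $(L+k-p\cdot\dot\gamma)\,\delta T=(k-E)\,\delta T=0$ on the energy level $k$, as you note), but a complete write-up should include the computation together with the off-diagonal smoothness of $\mathbb A$ coming from simplicity. Two small points to add for completeness: the argument identifies $\mathcal S$ and $\mathcal S'$ only on $\p_+S^kM\setminus S^k(\p M)$, so one should invoke continuity to cover glancing directions; and in the converse direction the conclusion $\mathbb A\equiv\mathbb A'$ from $d(\mathbb A-\mathbb A')=0$ needs the observation that $\mathbb A(x,y)=2kT_{x,y}-\int_{\gamma_{x,y}}(\alpha+2U)\to 0$ as $y\to x$, so both potentials extend continuously by $0$ on the diagonal, which you do state but should justify.
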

\begin{proof}
First, we introduce some notations. Let $G=2(k-U)g,\, G'=2(k-U')g'$, by Theorem \ref{MP system is magnetic system} and Proposition \ref{equivalence of simplicities}, $(G,\alpha)$ and $(G',\alpha')$ are simple magnetic systems of energy $\frac{1}{2}$. We denote by $\mathcal S_G$ and $\mathcal S_{G'}$ the scattering relations of $(G,\alpha)$ and $(G',\alpha')$ respectively (The definition of the scattering relation for a simple magnetic system is similar to that for a simple $\MP$-system by considering the magnetic flow of energy $\frac{1}{2}$). The notation $\p_+ SM$ denotes the bundle of inward unit vectors at $\p M$ with respect to metric $G$ (and also of $G'$, since $G|_{\p M}=G'|_{\p M}$).

Suppose $\mathbb A|_{\p M\times \p M}=\mathbb A'|_{\p M\times \p M}$, then by Proposition \ref{A=A_G} we have 
$$
\mathbb A_G|_{\p M\times \p M}=\mathbb A_{G'}|_{\p M\times \p M}.
$$
Then \cite[Lemma 2.5]{DPSU} implies that $\mathcal S_G=\mathcal S_{G'}$. Now we prove that this implies $\mathcal S=\mathcal S'$. Since the trajectories of $(g,\alpha,U)$ and $(G,\alpha)$ coincide, for any $(x,\xi)\in\p_+ S^kM$ the scattering relation $\mathcal S$ can be expressed in terms of $\mathcal S_G$ in the following way
$$
\mathcal S(x,\xi)=2\left[k-U\left(\mathfrak s_G\left(x,\frac{\xi}{2(k-U(x))}\right)\right)\right]\mathcal S_G\left(x,\frac{\xi}{2(k-U(x))}\right),
$$
where $\mathfrak s_G=\pi\circ\mathcal S_G$ ( Here we define $c(x,v)\doteq (x,cv)$ ). Exactly in the same way $\mathcal S'$ can be expressed in terms of $\mathcal S_{G'}$. Since $\mathcal S_G=\mathcal S_{G'}$, these expressions imply that $\mathcal S=\mathcal S'$.

Conversely, assume that $\mathcal S=\mathcal S'$. Since the trajectories of these two systems coincide, for any $(x,\xi)\in\p_+ SM$ the scattering relation $\mathcal S_G$ can be expressed in terms of $\mathcal S$ in the following way
$$
\mathcal S_G(x,\xi)=\frac{\mathcal S(x,2(k-U(x))\xi)}{2\left(k-U(\mathfrak s\left(x,2(k-U(x))\xi\right)\right))},
$$
where $\mathfrak s=\pi\circ\mathcal S$. Exactly in the same way $\mathcal S_{G'}$ can be expressed in terms of $\mathcal S'$. Since $\mathcal S=\mathcal S'$, these expressions imply that $\mathcal S_G=\mathcal S_{G'}$. Then \cite[Lemma 2.6]{DPSU} implies that
$$
\mathbb A_G|_{\p M\times \p M}=\mathbb A_{G'}|_{\p M\times \p M}.
$$
Applying Proposition \ref{A=A_G} we come to $\mathbb A|_{\p M\times \p M}=\mathbb A'|_{\p M\times \p M}$.
\end{proof}

\noindent {\it Remark:} Theorem \ref{equivalence of two problems} together with the counterexamples of the previous section shows that for generally a simple $\MP$-system, knowing the scattering relation of only one energy level is also insufficient for solving the scattering rigidity problem.


\section{Main results}
\subsection{Rigidity in a given conformal class}
Here we give the proof of our first main result which is a rigidity theorem in a fixed conformal class of a metric. The theorem below generalizes the corresponding
well-known results for the ordinary boundary rigidity problem, see \cite{Cr1, Mu, MR}, and for the magnetic boundary rigidity problem, see \cite{DPSU}.

\begin{Theorem}\label{conf}
Let $(g,\alpha,U)$ and $(g',\alpha',U')$ be simple
$\MP$-systems on $M$ with the same boundary action functions for both energy $k_1$ and $k_2$.
If $g'$ is conformal to $g$, then $g'=g$, $\alpha'=\alpha+d\varphi$ and $U'=U$ for some
smooth function $\varphi$ on $M$ vanishing on $\partial M$,
hence $(g',\alpha',U')$ is gauge equivalent to $(g,\alpha,U)$.
\end{Theorem}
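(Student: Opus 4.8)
The plan is to reduce the problem to the conformal rigidity theorem for simple magnetic systems proved in \cite{DPSU}, using the Maupertuis reduction of Section~2, and then to use the two distinct energy levels to decouple the unknown potential from the unknown conformal factor.

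First I would pass to the reduced magnetic systems. Set $G_i=2(k_i-U)g$ and $G'_i=2(k_i-U')g'$ for $i=1,2$. By Theorem~\ref{MP system is magnetic system} and Proposition~\ref{equivalence of simplicities} these are simple magnetic systems of energy $\frac12$, and by Proposition~\ref{A=A_G} their boundary action functions coincide, $\mathbb A_{G_i}|_{\p M\times\p M}=\mathbb A_{G'_i}|_{\p M\times\p M}$. Moreover, Lemma~\ref{bnd-0} gives $\imath^*g=\imath^*g'$ and $U\circ\imath=U'\circ\imath$, so that $G_i|_{\p M}=G'_i|_{\p M}$, which supplies the boundary agreement needed to invoke the magnetic result. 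Since $g'$ is conformal to $g$, each $G'_i$ is conformal to the corresponding $G_i$ (both are pointwise scalar multiples of $g$).

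Next I would apply the conformal rigidity theorem for simple magnetic systems of \cite{DPSU} to each pair $(G_i,\alpha)$, $(G'_i,\alpha')$. This yields $G_i=G'_i$ for $i=1,2$, together with $\alpha'=\alpha+d\varphi_i$ for some smooth $\varphi_i$ vanishing on $\p M$. Writing $g'=\rho g$ with $\rho>0$, the two metric identities read $(k_i-U)=(k_i-U')\rho$, so that
\[
\frac{k_1-U}{k_1-U'}=\rho=\frac{k_2-U}{k_2-U'}.
\]
Cross-multiplying and cancelling the common $k_1k_2$ and $UU'$ terms collapses this to $U'(k_2-k_1)=U(k_2-k_1)$; since $k_1\neq k_2$ this forces $U'=U$, whence $\rho\equiv1$, i.e. $g'=g$. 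Finally, from $d\varphi_1=\alpha'-\alpha=d\varphi_2$ with both $\varphi_i$ vanishing on the connected boundary we get $\varphi:=\varphi_1=\varphi_2$, a smooth function vanishing on $\p M$ with $\alpha'=\alpha+d\varphi$; taking $f=\mathrm{id}$ then exhibits $(g',\alpha',U')$ as gauge equivalent to $(g,\alpha,U)$.

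I do not expect a serious analytic obstacle here: once the reduction of Section~2 is in place, the heavy lifting is done by the cited magnetic conformal rigidity theorem, and the remainder is elementary pointwise algebra. The genuinely essential ingredient is the use of \emph{two} energy levels. A single level would yield only the one relation $(k_1-U)g=(k_1-U')g'$, which cannot separate $U'$ from the conformal factor $\rho$ — precisely the degeneracy exhibited by the counterexamples of Section~3 — so the main point to verify carefully is that the hypotheses of the magnetic conformal rigidity theorem (conformality and equality of boundary action functions of the reduced systems) hold at both levels, which is exactly what Section~2 and Lemma~\ref{bnd-0} provide.
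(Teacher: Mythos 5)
Your proposal is correct and follows essentially the same route as the paper: reduce to the magnetic systems $G_i=2(k_i-U)g$, $G'_i=2(k_i-U')g'$, apply the conformal rigidity theorem of \cite{DPSU} at each energy level to get $G_i=G'_i$ and $\alpha'=\alpha+d\varphi_i$, and then use the two resulting identities $(k_i-U)=(k_i-U')\rho$ to conclude $U=U'$, $\rho\equiv1$, and $\varphi_1=\varphi_2$. The only cosmetic difference is that you explicitly invoke Lemma~\ref{bnd-0} for the boundary agreement, which the paper leaves implicit.
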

\begin{proof}
Let $G_i=2(k_i-U)g,\, G'_i=2(k_i-U')g',\, i=1,2$, by Theorem \ref{MP system is magnetic system} and Proposition \ref{equivalence of simplicities}, $(G_i,\alpha)$ and $(G'_i,\alpha')$, for $i=1, 2$, are all simple magnetic systems of energy $\frac{1}{2}$. Let $\mathbb A_{G_i}$ and $\mathbb A_{G'_i}$ denote the Ma\~n\'e's action potentials (of energy $1/2$) for $(G_i,\alpha)$ and $(G'_i,\alpha')$ respectively. Then by Proposition \ref{A=A_G} we have $\mathbb A_{G_i}|_{\p M\times \p M}=\mathbb A_{G'_i}|_{\p M\times \p M}$. By the assumption $g'=\omega g$ for some strictly positive function $\omega\in C^\infty(M)$, therefore
$$
G'_i=\omega(k_i-U')(k_i-U)^{-1}G_i.
$$
Applying \cite[Theorem 6.1]{DPSU}, we get $G'_i=G_i$, i.e.
\begin{equation}\label{conf-eq1}
\omega(k_i-U')(k_i-U)^{-1}\equiv1,
\end{equation}
and that there are $\varphi_i\in C^{\infty}(M)$, with $\varphi_i|_{\p M}=0$, such that $\alpha'=\alpha+d\varphi_i$. But $\omega(k_i-U')(k_i-U)^{-1}\equiv1, \, i=1,2$ also impies that $$\frac{k_1-U'}{k_1-U}\equiv\frac{k_2-U'}{k_2-U}.$$
Thus $U=U'$ on $M$ (since $k_1\neq k_2$ ), together with \eqref{conf-eq1} this gives $\omega\equiv1$. On the other hand, $d\varphi_1=d\varphi_2$ with $\varphi_1|_{\p M}=\varphi_2|_{\p M}=0$ implies $\varphi_1=\varphi_2=\varphi$, thus $\alpha'=\alpha+d\varphi$ for some $\varphi\in C^{\infty}(M)$ with $\varphi|_{\p M}=0$.
\end{proof}

\noindent {\it Remark:} In Jollivet's paper on the scattering rigidity problem \cite{Jol}, the metrics $g$ and $g'$ are the same, namely the Euclidean metric, which means $\omega\equiv 1$ under the setting of Theorem \ref{conf}. Thus we have $(k-U')(k-U)^{-1}\equiv 1$, which implies $U=U'$ on $M$. That's why one fixed energy level is sufficient for Euclidean case. However, for general simple $\MP$-systems we need the information of two energy levels, as can be seen from the counterexamples and the proof above.

\subsection{Rigidity of real-analytic systems}
Our next result says that rigidity also holds in a class of real-analytic simple $\MP$-systems. This generalizes the corresponding result for the magnetic boundary rigidity problem in \cite{DPSU}.

\begin{Theorem}\label{analytic}
If $M$ is a real-analytic compact manifold with boundary,
and $(g,\alpha,U)$ and $(g',\alpha',U')$ are simple
real-analytic $\MP$-systems on $M$ with the same boundary action functions for both energy $k_1$ and $k_2$,
then these systems are gauge equivalent.
\end{Theorem}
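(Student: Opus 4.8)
The plan is to combine the boundary jet-determination already contained in Lemma \ref{jet} with a global analytic continuation argument, in the spirit of the real-analytic rigidity proofs of \cite{DPSU} and of Lassas--Sharafutdinov--Uhlmann. The point is that matching all boundary jets of real-analytic data forces agreement on a full neighborhood of $\p M$, after which analyticity propagates the agreement, as a gauge equivalence, to all of $M$.

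First I would invoke Lemma \ref{jet}. Since $(g,\alpha,U)$ and $(g',\alpha',U')$ have the same boundary action functions for both energies $k_1$ and $k_2$, it yields a simple $\MP$-system $(\bar g,\bar\alpha,\bar U)$, gauge equivalent to $(g',\alpha',U')$, whose full jets on $\p M$ (in any local coordinates) coincide with those of $(g,\alpha,U)$. The crucial feature I would extract from the proof of that lemma is that the realizing gauge diffeomorphism can be taken, near $\p M$, to be the single map $f_0=\exp_{\p M}\circ(\exp'_{\p M})^{-1}$ associated to $g$ and $g'$, the same for both energy levels. Because $g$ and $g'$ are real-analytic, $f_0$ is real-analytic where it is defined, and therefore $\bar g=f_0^*g'$, $\bar\alpha=f_0^*\alpha'+d\varphi$ and $\bar U=U'\circ f_0$ are real-analytic on a neighborhood of $\p M$.

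Next I would pass from jets to genuine equality near the boundary. Fixing $x_0\in\p M$ and a real-analytic chart, Lemma \ref{jet} gives that every partial derivative of $g,\alpha,U$ agrees with the corresponding one of $\bar g,\bar\alpha,\bar U$ at $x_0$; hence the two Taylor series coincide, and real-analyticity forces $g=\bar g$, $\alpha=\bar\alpha$, $U=\bar U$ on a neighborhood of $x_0$. Letting $x_0$ range over $\p M$, these equalities hold on an open neighborhood $V$ of $\p M$. Equivalently, on $V$ the real-analytic map $f_0$ satisfies $f_0^*g'=g$, $U'\circ f_0=U$ and $f_0^*\alpha'-\alpha=-d\varphi$, so that $f_0$ conjugates the two $\MP$-systems, and in particular their $\MP$-flows, over $V$.

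The main work, and the main obstacle, is the global extension. I would continue $f_0$ to a diffeomorphism $F\colon M\to M$ with $F|_{\p M}=\mathrm{id}$ by demanding that it intertwine the two $\MP$-flows: given $p\in M$, follow the $(g,\alpha,U)$ $\MP$-geodesic entering from $\p M$ through $p$ and map it to the corresponding $(g',\alpha',U')$ $\MP$-geodesic. Here simplicity is used twice: every point of $M$ lies on such a boundary $\MP$-geodesic, so $F$ is defined on all of $M$, and $M$ is diffeomorphic to a ball, hence simply connected, so the continuation is single-valued (trivial monodromy). Real-analyticity guarantees that the continuation of the local conjugacy remains a local conjugacy, so $F^*g'=g$ and $U'\circ F=U$ persist on all of $M$, and $F^*\Omega'=\Omega$; consequently $F^*\alpha'-\alpha$ is a closed real-analytic $1$-form, exact near $\p M$, and simple connectivity furnishes a global primitive $\Phi$ with $\Phi|_{\p M}=0$ and $F^*\alpha'-\alpha=-d\Phi$. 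Thus $F$ realizes a gauge equivalence between $(\bar g,\bar\alpha,\bar U)$ and $(g,\alpha,U)$, and composing it with the gauge equivalence supplied by Lemma \ref{jet} shows that $(g',\alpha',U')$ and $(g,\alpha,U)$ are gauge equivalent. The delicate points to be verified carefully are that a single near-boundary diffeomorphism $f_0$ (rather than the two energy-dependent boundary exponential maps of the reduced metrics $G_i,G'_i$) simultaneously matches the jets of all three structures, and that its analytic continuation is globally well-defined and respects the metric, the potential and the magnetic one-form at once.
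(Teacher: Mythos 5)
Your overall strategy --- match all boundary jets via Lemma \ref{jet}, use real-analyticity to upgrade this to equality of the (gauged) systems on a collar of $\p M$, and then propagate the local conjugacy to a global one by analytic continuation --- is a legitimate route in principle: it is essentially the Lassas--Sharafutdinov--Uhlmann scheme that underlies the magnetic analogue \cite[Theorem 6.2]{DPSU}. The local step has one point you should not gloss over: to deduce $\alpha=\bar\alpha$ near $\p M$ from equality of jets you need $\bar\alpha=f_0^*\alpha'+d\varphi_1$ to be real-analytic, so you must check that the gauge function $\varphi_1$ produced through \cite[Theorem 2.2]{DPSU} can be chosen real-analytic near the boundary (or instead compare $d\alpha$ with $f_0^*d\alpha'$, which are automatically real-analytic, and recover exactness of the difference of one-forms on the collar from $\imath^*\alpha=\imath^*\alpha'$).

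The genuine gap is the global continuation step, which is exactly the hard content of the theorem you would otherwise be quoting. Declaring that $F$ is obtained by ``mapping the $\MP$-geodesic through $p$ to the corresponding primed one'' does not by itself produce a well-defined map: a point $p$ lies on infinitely many boundary $\MP$-geodesics, and you must show (i) that the continuation of the local conjugacy exists along each of them without the image escaping $M$, (ii) that continuations along different geodesics reaching the same $p$ agree --- the monodromy theorem requires continuability along \emph{all} paths in the simply connected region, not only along the geodesics you selected --- and (iii) that the resulting $F$ is a diffeomorphism of $M$ fixing $\p M$ pointwise. These are precisely the points where \cite{DPSU} invest most of their effort, and your proposal only gestures at them. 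The paper avoids redoing this work: it reduces the two systems at the single energy $k_1$ to the magnetic systems $(G,\alpha)$ and $(G',\alpha')$ with $G=2(k_1-U)g$, invokes \cite[Theorem 6.2]{DPSU} as a black box to obtain a real-analytic gauge equivalence of those, which yields that $g'$ is conformal to $f^*g$, and only then uses the second energy level, through Theorem \ref{conf}, to split the conformal factor into metric and potential. If you want to keep your route, the efficient fix is to likewise pass to the reduced magnetic systems and quote the continuation argument of \cite{DPSU} rather than reproving it for $\MP$-systems.
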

\begin{proof}
Let $G=2(k_1-U)g$, $G'=2(k_1-U')g'$, by Theorem \ref{MP system is magnetic system} and Proposition \ref{equivalence of simplicities}, $(G,\alpha)$ and $(G',\alpha')$ are simple real-analytic magnetic systems of energy $\frac{1}{2}$. Let $\mathbb A_{G}$ and $\mathbb A_{G'}$ denote the Ma\~n\'e's action potentials (of energy $1/2$) for $(G,\alpha)$ and $(G',\alpha')$ respectively. Then by Proposition \ref{A=A_G} we have $\mathbb A_{G}|_{\p M\times \p M}=\mathbb A_{G'}|_{\p M\times \p M}$. Then \cite[Theorem 6.2]{DPSU} implies that $(G,\alpha)$ and $(G',\alpha')$ are gauge equivalent, i.e. there are some real-analytic diffeomorphism $f: M\rightarrow M$ with $f|_{\p M}=Id$ and some real-analytic function $\varphi$ on $M$ with $\varphi|_{\p M}=0$, such that $G'=f^*G=2(k_1-U\circ f)f^*g, \, \alpha'=f^*\alpha+d\varphi$. In particular, $g'$ is conformal to $f^*g$.

Now we consider the systems $(g',\alpha', U')$ and $(f^*g, \alpha', U\circ f)$. Let $\mathbb A_i$ and $\mathbb A'_i$ , $i=1, 2$, denote the Ma\~n\'e's action potentials (of energy $k_i$) for simple real-analytic $\MP$-systems $(g,\alpha, U)$ and $(g',\alpha', U')$ respectively. By our assumption, $\mathbb A'_i|_{\p M\times \p M}=\mathbb A_i|_{\p M\times \p M}=\bar{\mathbb A}_i|_{\p M\times \p M}, i=1,2$, where $\bar{\mathbb A}_i|_{\p M\times\p M}$ is the boundary action funciton of $(f^*g, \alpha', U\circ f)$ for energy $k_i$ (the second equality comes from the fact that $(g,\alpha, U)$ and $(f^*g,\alpha',U\circ f)$ are gauge equivalent) . Then, Theorem \ref{conf} impies that $U'=U\circ f$ and $g'=f^*g$.
\end{proof} 

\subsection{Rigidity of two-dimensional systems}
We show that two-dimensional simple $\MP$-systems are always rigid. Our result generalizes the boundary rigidity theorem
for simple Riemannian surfaces \cite{PU} and for simple two-dimensional magnetic systems \cite{DPSU}.

\begin{Theorem}\label{2-dim}
If\/ $\dim M=2$ and $(g, \alpha,U)$ and $(g',\alpha',U')$
are simple $\MP$-systems on $M$
with the same boundary action functions for both energy $k_1$ and $k_2$,
then these systems are gauge equivalent.
\end{Theorem}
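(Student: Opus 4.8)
The plan is to mirror the argument of the real-analytic case (Theorem~\ref{analytic}) almost verbatim, replacing the real-analytic magnetic rigidity input by the two-dimensional magnetic boundary rigidity theorem of \cite{DPSU}, and then exploiting the second energy level through Theorem~\ref{conf} to eliminate the conformal ambiguity. The whole point is that the Maupertuis reduction of Section~2 turns a two-dimensional simple $\MP$-system into a two-dimensional simple magnetic system, a setting in which rigidity is already known, and the only subtlety special to $\MP$-systems is recovering the conformal factor, which is exactly what a single energy level cannot see.

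Concretely, I would first fix the energy $k_1$ and set $G=2(k_1-U)g$ and $G'=2(k_1-U')g'$. By Theorem~\ref{MP system is magnetic system} and Proposition~\ref{equivalence of simplicities}, $(G,\alpha)$ and $(G',\alpha')$ are simple two-dimensional magnetic systems of energy $\tfrac12$, and Proposition~\ref{A=A_G} upgrades the hypothesis on $\MP$ boundary action functions at energy $k_1$ to the equality $\mathbb A_G|_{\p M\times\p M}=\mathbb A_{G'}|_{\p M\times\p M}$. I would then invoke the boundary rigidity theorem for simple two-dimensional magnetic systems from \cite{DPSU}: this produces a diffeomorphism $f:M\to M$ with $f|_{\p M}=\mathrm{Id}$ and a smooth $\varphi$ with $\varphi|_{\p M}=0$ such that $G'=f^*G=2(k_1-U\circ f)f^*g$ and $\alpha'=f^*\alpha+d\varphi$. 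Comparing this with $G'=2(k_1-U')g'$ shows immediately that $g'$ is conformal to $f^*g$.

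The final step reduces everything to the conformal rigidity already proved. I would introduce the auxiliary $\MP$-system $(f^*g,\alpha',U\circ f)$, which is gauge equivalent to $(g,\alpha,U)$ via $f$ and $\varphi$, and therefore shares the same boundary action functions for both energies $k_1$ and $k_2$; consequently $(g',\alpha',U')$ and $(f^*g,\alpha',U\circ f)$ have identical boundary action functions at both energies. Since $g'$ is conformal to $f^*g$, Theorem~\ref{conf} then forces $g'=f^*g$ and $U'=U\circ f$, and together with $\alpha'=f^*\alpha+d\varphi$ this exhibits $(g',\alpha',U')$ as gauge equivalent to $(g,\alpha,U)$.

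I expect no genuine obstacle beyond correctly assembling these pieces: the deep content is entirely hidden in the cited two-dimensional magnetic rigidity theorem of \cite{DPSU} (which itself generalizes the Pestov--Uhlmann result for simple surfaces), and in Theorem~\ref{conf}. The one place where care is needed is the logical bookkeeping around the two energy levels: the magnetic reduction at $k_1$ only yields gauge equivalence of the reduced systems up to the conformal factor $\tfrac{k_1-U\circ f}{k_1-U'}$, and it is precisely the independent information at $k_2$, funneled through the conformal rigidity theorem, that pins this factor to $1$. I would make sure to state explicitly that $(f^*g,\alpha',U\circ f)$ inherits both boundary action functions before applying Theorem~\ref{conf}.
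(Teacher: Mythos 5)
Your proposal is correct and follows essentially the same route as the paper: reduce at energy $k_1$ via the Maupertuis change $G=2(k_1-U)g$, apply the two-dimensional magnetic boundary rigidity theorem of \cite{DPSU} to get $f$, $\varphi$ with $g'$ conformal to $f^*g$ and $\alpha'=f^*\alpha+d\varphi$, then compare $(g',\alpha',U')$ with the gauge-equivalent auxiliary system $(f^*g,\alpha',U\circ f)$ at both energies and invoke Theorem~\ref{conf} to kill the conformal factor. Your closing remark about the second energy level being exactly what pins the factor $(k_1-U\circ f)(k_1-U')^{-1}$ to $1$ matches the paper's logic precisely.
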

\begin{proof}
Let $G=2(k_1-U)g$, $G'=2(k_1-U')g'$, by Theorem \ref{MP system is magnetic system} and Proposition \ref{equivalence of simplicities}, $(G,\alpha)$ and $(G',\alpha')$ are simple magnetic systems. Let $\mathbb A_{G}$ and $\mathbb A_{G'}$ denote the Ma\~n\'e's action potentials (of energy $1/2$) for $(G,\alpha)$ and $(G',\alpha')$ respectively. Then by Proposition \ref{A=A_G} we have $\mathbb A_{G}|_{\p M\times \p M}=\mathbb A_{G'}|_{\p M\times \p M}$. Applying \cite[Theorem 7.1]{DPSU} we find some diffeomorphism $f:M\to M$ with $f|_{\p M}=Id$, and a smooth function $\varphi:M\to \R$, with $\varphi|_{\p M}=0$, such that  $g'=(k_1-U\circ f)(k_1-U')^{-1}f^*g$ (i.e. $g'$ is conformal to $f^*g$) and $\alpha'=f^*\alpha+d\varphi$. Let $\mathbb A_i$ and $\mathbb A'_i$ denote the Ma\~n\'e's action potentials (of energy $k_i$) for simple $\MP$-systems $(g,\alpha, U)$ and $(g',\alpha', U')$ respectively. By our assumption, $\mathbb A'_i|_{\p M\times \p M}=\mathbb A_i|_{\p M\times \p M}=\bar{\mathbb A}_i|_{\p M\times \p M}, i=1,2$, where $\bar{\mathbb A}_i|_{\p M\times\p M}$ is the boundary action funciton of $(f^*g, \alpha', U\circ f)$ for energy $k_i$ (the second equality comes from the fact that $(g,\alpha, U)$ and $(f^*g,\alpha',U\circ f)$ are gauge equivalent) . Then, Theorem \ref{conf} impies that $U'=U\circ f$ and $g'=f^*g$. 
\end{proof}

\section{Final remark}

Our main results and the counterexamples have shown that it's necessary to consider two different energy levels for the boundary and scattering rigidity problems of simple $\MP$-systems. However, assuming the boundary action functions $\mathbb A=\mathbb A'$ for some fixed energy $k$, we still can obtain some weak version of boundary rigidity. 

After reviewing the proof of the main results, if two simple $\MP$-systems $(g, \alpha, U)$ and $(g', \alpha', U')$ have the same boundary action function for some energy $k$, then there exists a diffeomorphism $f: M\rightarrow M$ with $f|_{\p M}=Id$, and a smooth function $\varphi: M\rightarrow \R$ with $\varphi|_{\p M}=0$, such that $g'=(k-U')^{-1}(k-U\circ f)f^*g$ and $\alpha'=f^*\alpha+d\varphi$. Thus at least we can show that the magnetic potentials of these two $\MP$-systems are gauge equivalent, and the metrics of the two $\MP$-systems are gauge equivalent up to some conformal factor $(k-U')^{-1}(k-U\circ f)$, which is determined by the potentials of the two systems. In particular, $f=Id$ when $g$ is conformal to $g'$, and for the real-analytic $\MP$-systems, $f$ and $\varphi$ are both real-analytic. In some sense, this can be regarded as a weak boundary rigidity result, but the two systems may have different boundary action functions for energy levels other than $k$. However, if two simple $\MP$-systems are gauge equivalent, they must have the same boundary action functions for all $k>\sup_{M}U=\sup_MU'$.

Similar situation occurs for the scattering rigidity problem of simple $\MP$-systems.


\appendix
\section{}
\subsection{Ma\~n\'e's critical values}\label{Mane-action}
Here we adapt a certain part of the theory of convex superlinear Lagrangians
to the case of manifolds with boundary, see also \cite[Appendix A.1]{DPSU}.

Let $M$ be a compact Riemannian
manifold with boundary and let $L:TM\to\mathbb R$
be a $C^\infty$ Lagrangian satisfying the following hypotheses:

\begin{itemize}
\item {\em Convexity}: For all $x\in M$ the restriction of $L$ to $T_xM$
has everywhere positive definite Hessian.
\item {\em Superlinear growth}:
$$
\lim_{|v|\to\infty}\frac{L(x,v)}{|v|}=+\infty
$$
uniformly on $x\in M$.
\end{itemize}

The action of $L$ on an absolutely continuous curve
$\gamma:[a,b]\to M$ is
$$
\mathbb A_L(\gamma)=\int_a^b L(\gamma(t),\dot\gamma(t))\,dt.
$$

For each ${\lambda}\in \mathbb R$, the {\em Ma\~n\'e action potential}
$\mathbb A_{\lambda}:M\times M\to\mathbb R\cup \{-\infty\}$ is defined by
$$
\mathbb A_{\lambda}(x,y)=\inf_{\gamma\in \mathcal C(x,y)} \mathbb A_{L+{\lambda}}(\gamma),
$$
where
$\mathcal C(x,y)=\{\gamma:[0,T]\to M: \gamma(0)=x,\ \gamma(T)=y,\ \gamma
\text{ is absolutely continuous}\}$.

The {\em critical level} $c=c(L)$ is defined as
\begin{align*}
c(L)&=\sup\{\lambda\in\mathbb R: \mathbb A_{L+{\lambda}}(\gamma)<0\text{ for some closed curve }\gamma\}
\\
&=\inf\{\lambda\in\mathbb R: \mathbb A_{L+{\lambda}}(\gamma)\ge 0\text{ for every closed
curve }\gamma\}.
\end{align*}

Recall that the {\em energy function} $E:TM\to \mathbb R$ for $L$
is defined by
$$
E(x,v)=\de Lv(x,v)\cdot v-L(x,v),
$$
and that the energy function is constant on every solution $x(t)$ of the
Euler--Lagrange equation
\begin{equation}\label{e-l}
\frac d{dt}\de Lv(x(t),\dot x(t))=\de Lx(x(t),\dot x(t)).
\end{equation}

Let $\psi^t:TM\to TM$ be the Euler--Lagrange flow, defined
by $\psi^t(x,v)=(\gamma(t),\dot\gamma(t))$, where $\gamma$
is the solution of \eqref{e-l} with $\gamma(0)=x$ and $\dot\gamma(0)=v$.
For $x\in M$ and $k\in\mathbb R$, the {\em exponential map} at $x$
of energy $\lambda$
is defined to be the partial map
$\exp_x:T_xM\to M$ given by
$$
\exp^{\lambda}_x(tv)=\pi\circ\psi^t(v),\quad t\ge 0,\ v\in T_xM,\ E(x,v)=\lambda.
$$
Then $\exp^{\lambda}_x$  is a $C^1$-smooth partial map on $T_xM$
which is $C^\infty$-smooth on $T_xM\setminus\{0\}$.

The next two propositions were proved in \cite[Appendix A.1]{DPSU}.

\begin{Proposition}\label{kcl-simple}
If $\exp^{\lambda}_x:(\exp^{\lambda}_x)^{-1}(M)\to M$
is a diffeomorphism for every $x\in M$, then ${\lambda}\ge c(L)$.
\end{Proposition}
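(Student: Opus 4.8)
The plan is to deduce the conclusion directly from the two equivalent descriptions of $c(L)$ given above: it suffices to show that under the hypothesis the Ma\~n\'e potential $\mathbb A_\lambda(x,y)$ is finite for some (equivalently, every) pair $x,y$, since $\mathbb A_\lambda\equiv-\infty$ whenever $\lambda<c(L)$ (a single closed curve with $\mathbb A_{L+\lambda}(\gamma)<0$ can be traversed arbitrarily many times, and concatenated with short connectors, to drive the action to $-\infty$). I will obtain this finiteness by proving that the energy-$\lambda$ extremals are genuine minimizers of the free-time action $\mathbb A_{L+\lambda}$ in $\mathcal C(x,y)$, through a calibration (field-of-extremals) argument.

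First I would fix $x_0\in M$. Since $\exp^{\lambda}_{x_0}$ is a diffeomorphism from $(\exp^{\lambda}_{x_0})^{-1}(M)$ onto $M$, every $y\in M$ is joined to $x_0$ by a unique energy-$\lambda$ extremal $\gamma_{x_0,y}$ contained in $M$, whose initial velocity depends smoothly on $y$ for $y\neq x_0$. Set $S(y)=\mathbb A_{L+\lambda}(\gamma_{x_0,y})$, so that $S(x_0)=0$. Using the diffeomorphism property to guarantee the absence of conjugate points along these extremals, I would show that $S$ is locally Lipschitz on $M$ and $C^1$ on $M\setminus\{x_0\}$, with $d_yS=\partial_vL(\gamma_{x_0,y}(T),\dot\gamma_{x_0,y}(T))$, the momentum at the endpoint. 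The key point is the \emph{free-time} first variation formula, whose boundary term is $\langle p,\delta x\rangle-(E-\lambda)\,\delta T$; the term proportional to the variation of the travel time $T$ drops out precisely because every $\gamma_{x_0,y}$ has energy exactly $\lambda$, which is exactly why the energy level must be fixed to $\lambda$.

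Next, let $H(x,p)=\sup_{v}(\langle p,v\rangle-L(x,v))$ be the Legendre transform of $L$, well defined and smooth by the convexity and superlinear growth hypotheses, and satisfying $H(x,\partial_vL(x,v))=E(x,v)$. Since $E\equiv\lambda$ along every $\gamma_{x_0,y}$, the field of endpoint momenta obeys the stationary Hamilton--Jacobi relation $H(y,d_yS)=\lambda$ on $M\setminus\{x_0\}$. For an arbitrary competitor $\eta\in\mathcal C(x_0,y)$, the Fenchel--Young inequality $L(x,v)+H(x,p)\ge\langle p,v\rangle$ then gives $L(\eta,\dot\eta)+\lambda\ge\langle dS_\eta,\dot\eta\rangle=\frac{d}{dt}\,S(\eta)$ for almost every $t$; integrating, and using that $S\circ\eta$ is absolutely continuous since $S$ is Lipschitz, yields $\mathbb A_{L+\lambda}(\eta)\ge S(y)-S(x_0)=S(y)$, with equality for $\eta=\gamma_{x_0,y}$. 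Hence $\mathbb A_\lambda(x_0,y)=S(y)>-\infty$, and therefore $\lambda\ge c(L)$.

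The main obstacle is the regularity of $S$. One must verify that the diffeomorphism hypothesis genuinely precludes conjugate points, so that $S$ is $C^1$ off the base point with the differential stated above, and one must control the two places where smoothness can fail: the mild, distance-function-type singularity of $S$ at $x_0$, and the possible contact of competitor curves with $\partial M$. Both are handled by observing that $S$ stays Lipschitz across these sets, so the calibration inequality survives integration along any $\eta$; here the compactness of $M$, the convex--superlinear structure of $L$, and the smooth dependence of extremals on their endpoints are precisely what make the field-of-extremals construction go through.
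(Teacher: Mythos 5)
Your proof is correct, but be aware that the paper does not actually prove this proposition: it is quoted from \cite[Appendix A.1]{DPSU} with the remark that the proof is given there, so there is no in-paper argument to compare against. What you propose is a self-contained proof by the classical Weierstrass field-of-extremals (calibration) method, and it works: the reduction to showing $\mathbb A_\lambda(x_0,y)>-\infty$ for one pair is valid, since for $\lambda<c(L)$ one can loop a closed curve of negative $(L+\lambda)$-action arbitrarily many times to drive the action potential to $-\infty$; your free-time first variation formula is the right one (the $(E-\lambda)\,\delta T$ term vanishes exactly because the field has energy $\lambda$); and since the endpoint momenta $\partial_vL(\gamma_{x_0,y}(T),\dot\gamma_{x_0,y}(T))$ range over the compact level set $E^{-1}(\lambda)$, the function $S$ is globally Lipschitz, so the Fenchel--Young inequality integrates along any absolutely continuous competitor. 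This is in the same spirit as, and essentially a fleshing-out of, the argument in \cite{DPSU}.

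Three points should be made explicit in a write-up. First, the hypothesis forces $\lambda>E(x,0)=-L(x,0)$ for every $x$ (otherwise $E^{-1}(\lambda)\cap T_xM$ degenerates and $\exp^{\lambda}_x$ cannot be onto $M$); you need this to handle the set of times where a competitor sits at the singular point $x_0$ with zero velocity, where the calibration inequality reduces to $0\le L(x_0,0)+\lambda$. Second, the identity $d_yS=\partial_vL$ and the relation $H(y,d_yS)=\lambda$ must also be verified at $y\in\partial M$ in the manifold-with-boundary sense; this is harmless but is where the extension of the data to $M_1\supset M$ is implicitly used. Third, the diffeomorphism hypothesis alone does not guarantee that the extremal $\gamma_{x_0,y}$ stays inside $M$ (that would require convexity of $\partial M$), so your claim of equality for $\eta=\gamma_{x_0,y}$ is not justified at this level of generality; fortunately only the inequality $\mathbb A_{L+\lambda}(\eta)\ge S(y)$, hence finiteness of $\mathbb A_\lambda(x_0,y)$, is needed to conclude $\lambda\ge c(L)$.
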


\begin{Proposition}\label{minimize-L}
If ${\lambda}>c(L)$ and $x,y\in M$ $x\ne y$, then
there is $\gamma\in \mathcal C(x,y)$ such that
$$
\mathbb A_{\lambda}(x,y)=\mathbb A_{L+{\lambda}}(\gamma).
$$
Moreover, the energy of $\gamma$ is $E(\gamma,\dot\gamma)\equiv{\lambda}$.
\end{Proposition}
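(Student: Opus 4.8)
The plan is to follow the classical Tonelli approach to free-time action minimizers, reducing the free-time problem to a family of fixed-time problems parametrized by the travel time $T$. For each $T>0$ I would introduce the fixed-time minimal action
$$
h_T(x,y)=\inf\left\{\int_0^T L(\gamma,\dot\gamma)\,dt : \gamma(0)=x,\ \gamma(T)=y\right\},
$$
so that $\mathbb{A}_\lambda(x,y)=\inf_{T>0}\left[h_T(x,y)+\lambda T\right]$. Existence of a minimizer for each fixed-time problem is Tonelli's theorem: since $M$ is compact (hence complete) and $L$ is convex and superlinear, the superlinearity estimate $L(x,v)\ge A|v|-B(A)$ shows that any minimizing sequence for $h_T$ has uniformly bounded length and is equi-absolutely continuous, so after passing to a subsequence it converges to an absolutely continuous curve, and lower semicontinuity of the action under this convergence exhibits the limit as a minimizer. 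Such a Tonelli minimizer is smooth, solves the Euler--Lagrange equation \eqref{e-l}, and hence has constant energy $E$.

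The core of the argument is to show that the infimum over $T$ is attained at some $T_*\in(0,\infty)$. First one checks $\mathbb{A}_\lambda(x,y)>-\infty$: this is exactly where $\lambda>c(L)$ enters, since the definition of the critical value guarantees $\mathbb{A}_{L+\lambda}(\gamma)\ge 0$ on closed curves, and combining this with the triangle inequality for the action potential bounds $h_T+\lambda T$ from below. For the two ends of the $T$-interval, as $T\to0^+$ the fixed distance between the distinct points $x$ and $y$ forces large speeds, so superlinearity drives $h_T\to+\infty$ faster than $\lambda T\to0$; as $T\to+\infty$, choosing the slack $\delta=\lambda-c(L)>0$ and applying the lower bound at level $\lambda-\delta=c(L)$ gives $h_T+\lambda T\ge \delta T+\text{const}\to+\infty$. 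Hence a minimizing sequence for $\inf_T$ remains in a compact subinterval $[T_0,T_1]\subset(0,\infty)$, and a limiting argument combining this with the fixed-time Tonelli minimizers produces a curve $\gamma$ realizing $\mathbb{A}_\lambda(x,y)$ at some interior $T_*$.

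Finally, to identify the energy I would use the first variation of $h_T$ in the time variable: the standard value-function computation yields $\partial_T h_T(x,y)=-E$, where $E$ is the constant energy of the fixed-time minimizer. Since $T_*$ is an interior minimum of $T\mapsto h_T(x,y)+\lambda T$, the first-order condition $-E+\lambda=0$ gives $E(\gamma,\dot\gamma)\equiv\lambda$ along $\gamma$.

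I expect the main obstacle to be the two-sided control of the $T$-infimum, namely making rigorous that superlinearity alone blocks $T\to0^+$ while the strict gap $\lambda>c(L)$ (rather than merely $\lambda\ge c(L)$) blocks $T\to+\infty$, so that the minimum is genuinely interior and the optimality condition in $T$ is available. The differentiation relation $\partial_T h_T=-E$, though classical in Tonelli theory, also requires care to justify at the minimizing time $T_*$.
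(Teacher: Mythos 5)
Your proposal follows the same route as the source the paper relies on: the paper does not prove this proposition itself but cites \cite[Appendix A.1]{DPSU}, and the argument there is exactly the classical Tonelli--Ma\~n\'e scheme you describe (fixed-time Tonelli minimizers, finiteness of $\mathbb A_{\lambda}$ from $\lambda>c(L)$, superlinearity blocking $T\to0^+$, the gap $\lambda-c(L)>0$ blocking $T\to+\infty$, and a first-order condition in $T$ identifying the energy). Two points need repair or sharpening. First, since $M$ has boundary, a fixed-time Tonelli minimizer need not be a smooth solution of \eqref{e-l}: it can contain arcs along $\partial M$, so your step ``smooth, solves the Euler--Lagrange equation, hence has constant energy'' is not available in general; constancy of $E(\gamma,\dot\gamma)$ should instead be derived from inner variations (reparametrizations of the time interval, i.e., the second Erdmann/DuBois--Reymond condition), which do not move the image of the curve and are therefore unaffected by the boundary constraint. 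Second, rather than differentiating the value function via $\partial_T h_T=-E$, whose differentiability at $T_*$ you would have to justify, it is cleaner to vary the minimizer directly by $\gamma_s(t)=\gamma(st)$ on $[0,T_*/s]$: stationarity at $s=1$ gives $\int_0^{T_*}\bigl(E(\gamma,\dot\gamma)-\lambda\bigr)\,dt=0$ outright, and combined with the Erdmann condition this yields $E\equiv\lambda$. With these two adjustments your outline is a complete proof and coincides with the cited one.
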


Now, we apply the above to the case of $\MP$-systems.
For a simple $\MP$-system $(M,g,\alpha,U)$,
the $\MP$-flow can also be obtained as the Euler--Lagrange flow with
the corresponding Lagrangian defined by
$$
L(x,v)=\frac 12|v|^2_g-\alpha_x(v)-U(x).
$$

\begin{Lemma}\label{minimize}
Let $(g,\alpha,U)$ be a simple $\MP$-system on $M$.
For $x,y\in M$, $x\ne y$,
\begin{equation*}
\mathbb A_k(x,y)=\mathbb A_{L+k}(\gamma_{x,y})
=2kT_{x,y}-\int_{\gamma_{x,y}}(\alpha+2U),
\end{equation*}
where $\gamma_{x,y}:[0,T_{x,y}]\to M$ is the $\MP$-geodesic with constant energy $k$ from $x$ to $y$.
\end{Lemma}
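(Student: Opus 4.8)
The plan is to recognize the time free action as the Ma\~n\'e action $\mathbb A_{L+k}(\gamma)$ of the Lagrangian $L(x,v)=\frac12|v|^2_g-\alpha_x(v)-U(x)$, and then to feed the data of a simple $\MP$-system into the general minimization results of the Appendix (Propositions \ref{kcl-simple} and \ref{minimize-L}).

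First I would check that $L$ satisfies the two standing hypotheses there: its fibrewise Hessian in $v$ is exactly $(g_{ij})$, hence positive definite (convexity), and the quadratic leading term forces $L(x,v)/|v|\to+\infty$ uniformly in $x$, since $\alpha_x(v)/|v|$ stays bounded and $U(x)/|v|\to0$ (superlinear growth). A short computation of the energy $E(x,v)=\partial_v L\cdot v-L$ gives $E(x,v)=\frac12|v|^2_g+U(x)$, which is precisely the $\MP$-energy; consequently the Euler--Lagrange flow of $L$ is the $\MP$-flow, and the exponential map $\exp^\lambda_x$ of energy $\lambda$ from the Appendix coincides with the $\MP$-exponential map at energy $\lambda$. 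Collecting terms one also verifies that $\mathbb A_{L+k}(\gamma)$ equals the time free action $\mathbb A(\gamma)$ defined just before \eqref{Axy}, so that $\mathbb A_k(x,y)=\inf_{\gamma\in\mathcal C(x,y)}\mathbb A_{L+k}(\gamma)=\inf_{\gamma\in\mathcal C(x,y)}\mathbb A(\gamma)$.

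The crucial point is to place the energy level $k$ \emph{strictly} above the critical value $c(L)$, because Proposition \ref{minimize-L} requires a strict inequality. For this I would exploit that simplicity is stable under small perturbations of the energy level: for some small $\varepsilon>0$ the system $(g,\alpha,U)$ remains simple at energy $k-\varepsilon$, so $\exp^{k-\varepsilon}_x$ is a diffeomorphism for every $x$, and Proposition \ref{kcl-simple} then yields $c(L)\le k-\varepsilon<k$. I expect this to be the main obstacle, since the whole identification hinges on upgrading the diffeomorphism hypothesis of simplicity into the sharp bound $k>c(L)$.

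With $k>c(L)$ established, Proposition \ref{minimize-L} produces a minimizer $\gamma\in\mathcal C(x,y)$ with $\mathbb A_k(x,y)=\mathbb A_{L+k}(\gamma)$ and constant energy $E(\gamma,\dot\gamma)\equiv k$. By the regularity of action minimizers of convex superlinear Lagrangians, $\gamma$ solves the Euler--Lagrange equation, hence is an $\MP$-geodesic of energy $k$ joining $x$ to $y$; the diffeomorphism property of $\exp^{\MP}_x$ makes such a geodesic unique, so $\gamma=\gamma_{x,y}$. Finally I would substitute the energy relation $\frac12|\dot\gamma_{x,y}|^2_g=k-U$ into $\mathbb A_{L+k}(\gamma_{x,y})=\int_0^{T_{x,y}}\big(\frac12|\dot\gamma|^2_g-\alpha(\dot\gamma)-U+k\big)\,dt$ and collect terms to obtain $2kT_{x,y}-\int_{\gamma_{x,y}}(\alpha+2U)$, which is the asserted formula.
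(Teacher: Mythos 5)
Your proposal is correct and follows essentially the same route as the paper: verify that $L(x,v)=\frac12|v|^2_g-\alpha_x(v)-U(x)$ satisfies the convexity and superlinearity hypotheses, use the stability of simplicity under small perturbations of the energy level together with Proposition~\ref{kcl-simple} to get the strict inequality $k>c(L)$, apply Proposition~\ref{minimize-L} to obtain an energy-$k$ minimizer, and identify it with $\gamma_{x,y}$ via simplicity before substituting $\frac12|\dot\gamma|^2_g=k-U$ to get the stated formula. Your write-up simply fills in details (the energy computation and the final collection of terms) that the paper leaves implicit.
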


\begin{proof}
It is easy to see that the simplicity assumption implies that
for this Lagrangian the assumptions of Proposition \ref{kcl-simple}
hold for all $\lambda$ sufficiently close to $k$. Therefore,
the proposition gives $k>c(L)$. Then Proposition \ref{minimize-L}
shows that, given $x\ne y$ in $M$, there is $\gamma\in\mathcal C(x,y)$
with energy $k$ such that $\mathbb A(x,y)=\mathbb A(\gamma)$.
Using simplicity, one can then prove that $\gamma$ is a $\MP$-geodesic with constant energy $k$, i.e., $\gamma=\gamma_{x,y}$.
\end{proof}

\subsection{$\MP$-convexity}
Let $M$ be a compact manifold with boundary,
endowed with a Riemannian metric~$g$, a closed $2$-form $\Omega$ and a smooth function $U$. Consider a manifold $M_1$ such that $M_1^\text{int}\supset M$.
Extend $g$, $\Omega$ and $U$ to $M_1$ smoothly, preserving the former notation
for extensions.
We say that $M$ is
{\em $\MP$-convex} at $x\in \partial M$
if there is a neighborhood $O$ of $x$ in $M_1$ such that
all $\MP$-geodesics of constant energy $k$ in $O$, passing through $x$
and tangent to $\partial M$ at $x$,
lie in $M_1\setminus M^{\text{int}}$.
If, in addition, these geodesics
do not intersect $M$ except for $x$,
we say that $M$ is {\em strictly $\MP$-convex} at $x$.
It is not hard to show that these definitions depend neither on the
choice of $M_1$ nor on the way we extend $g$, $\Omega$ and $U$ to $M_1$.

As before, we let $\Lambda$ denote the second fundamental form of $\partial M$
and $\nu(x)$ the inward unit vector normal to $\partial M$ at $x$.

\begin{Lemma}\label{convexity}
If $M$ is $\MP$-convex at $x\in\partial M$, then
\begin{equation}\label{nonstrict-conv}
\Lambda(x,v)\ge\langle Y_x(v),\nu(x)\rangle-d_xU(\nu(x))
\quad\text{for all } v\in S^k_x(\partial M).
\end{equation}
If the inequality is strict, then $M$ is strictly $\MP$-convex at $x$.
\end{Lemma}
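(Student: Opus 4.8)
The plan is to test an $\MP$-geodesic that is tangent to $\partial M$ at $x$ against a boundary defining function and read off the sign of its second derivative at $x$. Working in the extension $M_1$, where $g$, $\Omega$, $U$ are smooth, fix a boundary defining function $\rho$, say the signed distance to $\partial M$ taken positive on $M^{\mathrm{int}}$, so that near $\partial M$ one has $|\nabla\rho|\equiv 1$, while on $\partial M$ itself $\nabla\rho=\nu$ and $\hess\rho(x)(v,v)=-\Lambda(x,v)$ for $v\in T_x\partial M$ (the sign being pinned down by the convention that $\Lambda>0$ for a strictly convex boundary). Given $v\in S^k_x(\partial M)$, let $\gamma$ be the $\MP$-geodesic with $\gamma(0)=x$ and $\dot\gamma(0)=v$, and set $h(t)=\rho(\gamma(t))$. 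Then $h(0)=0$ and $h'(0)=\langle\nabla\rho,v\rangle=\langle\nu,v\rangle=0$ since $v$ is tangent to $\partial M$. Differentiating once more and substituting the $\MP$-equation $\nabla_{\dot\gamma}\dot\gamma=Y(\dot\gamma)-\nabla U$ yields the key identity
\[
h''(0)=\hess\rho(v,v)+\langle\nabla\rho,\nabla_{\dot\gamma}\dot\gamma\rangle\big|_{t=0}=-\Lambda(x,v)+\langle Y_x(v),\nu(x)\rangle-d_xU(\nu(x)).
\]
This identity is the heart of the matter; the rest is sign bookkeeping.

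For the first assertion, $\MP$-convexity at $x$ means precisely that every such $\gamma$ lies in $M_1\setminus M^{\mathrm{int}}$ for small $|t|$, i.e. $h(t)\le 0$ near $t=0$. Since $h(0)=h'(0)=0$, the point $t=0$ is a local maximum of $h$, so $h''(0)\le 0$, and the identity above is exactly \eqref{nonstrict-conv}.

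For the converse, suppose the inequality in \eqref{nonstrict-conv} is strict for every $v\in S^k_x(\partial M)$; by the identity this says $h''(0)<0$ for each tangent $v$ of energy $k$. The point to handle with care is uniformity. Here $S^k_x(\partial M)=\{v\in T_x\partial M:|v|^2=2(k-U(x))\}$ is a sphere, hence compact, and $(v,t)\mapsto\rho(\gamma_{x,v}(t))$ is smooth, so a Taylor estimate with constants uniform in $v$ produces a single $\varepsilon>0$ for which $\rho(\gamma_{x,v}(t))<0$ whenever $0<|t|\le\varepsilon$ and $v\in S^k_x(\partial M)$. Since the speed $|\dot\gamma|=\sqrt{2(k-U)}$ is bounded below near $x$, one may then shrink the neighborhood $O$ so that each such geodesic exits $O$ before time $\varepsilon$; within $O$ the geodesic thus meets $M$ only at $x$, which is strict $\MP$-convexity. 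I expect this uniform passage from the pointwise second-derivative test to one working neighborhood $O$, together with fixing the convention $\hess\rho=-\Lambda$, to be the only delicate part, the computation of $h''(0)$ itself being routine.
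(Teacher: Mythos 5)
Your argument is correct and coincides with the paper's own proof: both introduce a boundary defining function $\rho$ with $\grad\rho=\nu$ and $\Lambda=-\hess\rho$ on $\partial M$, compute $\frac{d^2}{dt^2}\rho(\gamma(t))\big|_{t=0}=-\Lambda(x,v)+\langle Y_x(v),\nu\rangle-d_xU(\nu)$ using the $\MP$-equation, and read off the nonstrict inequality from the second-derivative test at the local maximum $t=0$. Your uniform Taylor estimate over the compact sphere $S^k_x(\partial M)$ for the strict case is exactly the paper's $\rho\circ\gamma(t)\le-\tfrac14\delta t^2$ bound.
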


\begin{proof}
Suppose $M$ is $\MP$-convex at $x$.
Choosing a smaller $O$ if necessary, we may assume
that there is a smooth function $\rho$ on $O$ such that $|\grad\rho|=1$
and $\partial M\cap O=\rho^{-1}(0)$. Further we may assume that all the above
$\MP$-geodesics lie in $O^-=\{x:\rho(x)\le 0\}$.

Let $v\in S^k_x (\partial M)$ and $\gamma(t)$ be the
$\MP$-geodesic with $\gamma(0)=x$, $\dot \gamma(0)=v$. By assumption,
$\rho\circ \gamma(t)\le 0$ for all small $t$. Therefore,
$$
\frac{d^2}{dt^2}\big[\rho\circ\gamma(t)\big]\Big|_{t=0}\le 0.
$$
Since
\begin{multline*}
\frac{d^2}{dt^2}\big[\rho\circ\gamma(t)\big]
=\frac{d}{dt}\langle \grad\rho(\gamma(t)),\dot\gamma(t)\rangle
\\
=\langle \nabla_{\dot\gamma(t)}\grad\rho(\gamma(t)),\dot\gamma(t)\rangle
+\langle \grad\rho(\gamma(t)),\ddot\gamma(t)\rangle
\\
=\hess_{\gamma(t)}\rho (\dot\gamma(t),\dot\gamma(t))+\langle \grad\rho(\gamma(t)),Y(\dot\gamma(t))-\nabla U(\gamma(t))\rangle
\end{multline*}
and since $\Lambda(x,v)=-\hess_x\rho(v,v)$ and $\grad\rho(x)=\nu(x)$
when $(x,v)\in S^k(\p M)$, we arrive at \eqref{nonstrict-conv}.

Now, assume that \eqref{nonstrict-conv} is strict.
Then there is $\delta>0$ such that for every $\MP$-geodesic $\gamma$
in $M_1$ with $\gamma(0)=x$ and $\dot\gamma(0)=v\in S^k_x(\partial M$),
$$
\frac{d^2}{dt^2}\big[\rho\circ\gamma(t)\big]\Big|_{t=0}\le -\delta.
$$
Thus, there is a small $\varepsilon>0$ such that
$$
\rho\circ\gamma(t)\le -\frac 14\delta t^2
\quad \text{for all }t\in (-\varepsilon,\varepsilon).
$$
This implies the second statement.
\end{proof}

\subsection{Scattering relation}
For $(x,\xi)\in \p_+ S^k M$, let $\tau(x,\xi)$ be the time when the $\MP$-geodesic $\gamma_{x,\xi}$, such that $\gamma_{x,\xi}(0)=x$,
$\dot\gamma_{x,\xi}(0)=\xi$, exits. Clearly, the function $\tau(x,\xi)$ is continuous and,
using the implicit function theorem, it is easily seen to be smooth near a point $(x,\xi)$ such that
the $\MP$-geodesic $\gamma_{x,\xi}(t)$ meets $\p M$
transversally at $t=\tau(x,\xi)$. By  \eqref{strict-conv} and Lemma \ref{convexity} in Appendix A,
the last condition holds everywhere on $\p_+S^kM\setminus S^k(\p M)$.
Thus, $\tau$ is a smooth function on $\p_+S^kM\setminus S^k(\p M)$.

\begin{Lemma}\label{l-l}
For a simple $\MP$-system, the function $\tau:\partial_+S^kM\to\mathbb R$ is smooth.
\end{Lemma}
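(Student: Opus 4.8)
The plan is to reduce the question to a single scalar equation solved by the exit time and then apply the implicit function theorem; the only genuine difficulty is the behaviour at the glancing vectors $S^k(\p M)$, where $\tau$ vanishes and the naive equation degenerates. As in the proof of Lemma \ref{convexity}, I would extend $M$ to a slightly larger manifold $M_1$ with $M\subset M_1^{\mathrm{int}}$, extend $(g,\alpha,U)$ smoothly, and fix near $\p M$ a smooth function $\rho$ with $|\grad\rho|=1$, $\p M=\rho^{-1}(0)$, $\grad\rho=\nu$ on $\p M$, and $M^{\mathrm{int}}=\{\rho>0\}$ locally. For energy-$k$ vectors based on the boundary I set
$$
F(x,\xi,t)=\rho(\gamma_{x,\xi}(t)),\qquad x\in\p M,\ \xi\in S^k_x M_1,\ t\ \text{near }0,
$$
so that, for inward transversal $\xi$, the exit time $\tau(x,\xi)$ is the first positive zero of $t\mapsto F(x,\xi,t)$. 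The text preceding the lemma already settles smoothness on $\p_+S^kM\setminus S^k(\p M)$ by transversality, so I would only have to treat a glancing vector $(x_0,\xi_0)\in S^k(\p M)$.

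The obstruction at such a point is that $F(x,\xi,0)=\rho(x)=0$ identically for $x\in\p M$, so $t=0$ is always a zero of $F$; at a glancing vector this trivial zero collides with the exit zero, and $F_t(x_0,\xi_0,0)=d_{x_0}\rho(\xi_0)=\langle\nu,\xi_0\rangle=0$, so one cannot solve $F=0$ for $t$ directly. To desingularize I would invoke Hadamard's lemma: since $F(x,\xi,0)=0$ on this set, write
$$
F(x,\xi,t)=t\,G(x,\xi,t),\qquad G(x,\xi,t)=\int_0^1 F_t(x,\xi,st)\,ds,
$$
with $G$ smooth and $G(x,\xi,0)=F_t(x,\xi,0)=\langle\nu,\xi\rangle$. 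For inward vectors the exit time is exactly the positive zero of $G$, the factor $t$ having removed the spurious root at the origin.

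The key step is the nondegeneracy needed for the implicit function theorem at $(x_0,\xi_0,0)$. Differentiating under the integral gives $\p_t G(x,\xi,0)=\tfrac12 F_{tt}(x,\xi,0)$, and from the second-derivative computation in the proof of Lemma \ref{convexity},
$$
F_{tt}(x_0,\xi_0,0)=\hess_{x_0}\rho(\xi_0,\xi_0)+\langle\grad\rho,\,Y(\xi_0)-\nabla U(x_0)\rangle=-\Lambda(x_0,\xi_0)+\langle Y_{x_0}(\xi_0),\nu\rangle-d_{x_0}U(\nu).
$$
By strict $\MP$-convexity \eqref{strict-conv} this is strictly negative, so $\p_t G(x_0,\xi_0,0)<0$. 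The implicit function theorem then yields a smooth solution $t=\tau(x,\xi)$ of $G(x,\xi,t)=0$ with $\tau(x_0,\xi_0)=0$ on a full neighbourhood of $(x_0,\xi_0)$ in the bundle $S^kM_1|_{\p M}$ of energy-$k$ vectors based on $\p M$ (including outward directions). Since this neighbourhood meets $\p_+S^kM$ two-sidedly across $S^k(\p M)$, restricting the solution to inward vectors gives the desired smooth extension of $\tau$ up to the glancing set. Continuity of $F_{tt}$ keeps $\p_t G<0$ nearby, so $G$ is strictly monotone in $t$ and its zero is unique; hence this solution agrees with the exit time already constructed on the transversal part, and patching the two descriptions shows $\tau$ is smooth on all of $\p_+S^kM$.

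I expect the main obstacle to be precisely this glancing analysis: recognizing that the correct object is $G=F/t$ rather than $F$ itself, and that strict $\MP$-convexity is exactly what guarantees $\p_t G\neq0$ at the tangent vectors, which is what the transversal argument cannot supply.
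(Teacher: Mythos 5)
Your proposal is correct and follows essentially the same route as the paper: the paper also divides the trivial root $t=0$ out of $h(x,\xi,t)=\rho(\gamma_{x,\xi}(t))$ (via a second-order Taylor expansion with remainder rather than Hadamard's lemma, which is the same device), and then applies the implicit function theorem at glancing vectors using exactly the sign condition $\tfrac12\bigl(-\Lambda(x,\xi)+\langle\nu(x),Y(\xi)-\nabla U(x)\rangle\bigr)<0$ supplied by strict $\MP$-convexity \eqref{strict-conv}. The patching with the transversal region, where smoothness follows from ordinary transversality, is likewise identical.
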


\begin{proof}
Let $\rho$ be a smooth nonnegative function on $M$
such that $\partial M=\rho^{-1}(0)$
and $|\grad\rho|=1$ in some neighborhood of $\partial M$.
Put $h(x,\xi,t)=\rho(\gamma_{x,\xi}(t))$ for $(x,\xi)\in\p_+S^kM$.
Then
\begin{align*}
&h(x,\xi,0)=0,
\\
&\de ht(x,\xi,0)=\langle \nu(x),\xi\rangle,
\\
&
\frac{\partial^2 h}{\partial t^2}(x,\xi,0)
=\hess_x\rho (\xi,\xi)+\langle \nu(x),Y(\xi)-\nabla U(x)\rangle.
\end{align*}

Therefore, for some smooth function $R(x,\xi,t)$,
$$
h(x,\xi,t)=\langle \nu(x),\xi\rangle t
+\frac12\left(\hess_x\rho (\xi,\xi)+\langle \nu(x),Y(\xi)-\nabla U(x)\rangle\right)t^2
+R(x,\xi,t)t^3.
$$
Since $h(x,\xi,\tau(x,\xi))=0$, it follows that $L=\tau(x,\xi)$
is a solution of the equation
\begin{equation}\label{equation-l}
F(x,\xi,L):=\langle \nu(x),\xi\rangle
+\frac12\left(\hess_x\rho (\xi,\xi)+\langle \nu(x),Y(\xi)-\nabla U(x)\rangle\right)L
+R(x,\xi,t)L^2=0.
\end{equation}
By \eqref{strict-conv}, for $(x,\xi)\in S^k(\partial M)$
\begin{align*}
\de FL(x,\xi,0)
&=\frac12\left(\hess_x\rho (\xi,\xi)+\langle \nu(x),Y(\xi)-\nabla U(x)\rangle\right)
\\
&=\frac12\left(-\Lambda(x,\xi)+\langle \nu(x),Y(\xi)-\nabla U(x)\rangle\right)<0.
\end{align*}
Now, the implicit function theorem yields smoothness of $\tau(x,\xi)$
in a neighborhood of $S^k(\p M)$. Since $\tau$ is also smooth
on $\p_+S^kM\setminus S^k(\p M)$, we conclude that
$\tau$ is smooth on $\p_+S^kM$.
\end{proof}


\begin{thebibliography}{aa}
\bibitem{AS}
\newblock D. V. Anosov, Y. G. Sinai,
\newblock \emph{Certain smooth ergodic systems} [Russian],
\newblock Uspekhi Mat. Nauk, \textbf{22} (1967), 107--172.
     
\bibitem{Ar}
\newblock V. I. Arnold,
\newblock \emph{Some remarks on flows of line elements and frames},
\newblock Sov. Math. Dokl., \textbf{2} (1961), 562--564.

\bibitem{ArG}
\newblock V. I. Arnold, A. B. Givental,
\newblock {\it ``Symplectic Geometry",}
\newblock Dynamical Systems IV, Encyclopaedia of Mathematical Sciences, Springer Verlag, Berlin, 1990.

\bibitem{Cr1} C. B. Croke,
Rigidity and distance between boundary points,
{\em J. Diff. Geom.} {\bf 33} (1991), 445--464.

\bibitem{Cr2} C. B. Croke,
Rigidity theorems in Riemannian geometry,
in {\em Geometric Methods in Inverse Problems and PDE Control},
IMA Vol. Math. Appl. {\bf 137}, Springer, New York, 2004, 47--72.

\bibitem{DPSU} N. S. Dairbekov, G. P. Paternain, P. Stefanov, G. Uhlmann, \emph{The boundary rigidity problem in the presence of a magnetic field}, Adv. Math., {\bf 216} (2007), 535--609.

\bibitem{DU} N. S. Dairbekov, G. Uhlmann, \emph{Reconstructing the metric and magnetic field from the scattering relation}, Inverse Problems and Imaging, {\bf 4} (2010), 397-409.

\bibitem{H} P. Herreros, {\it Scattering Boundary Rigidity in the Presence of a Magnetic Field}, Preprint.

\bibitem{HJ} P. Herreros, J. Vargo, {\it Scattering Rigidity for Analytic Riemannian Manifolds with a Possible Magnetic Field}, J. Geom. Anal. {\bf 21}(3), 641--664 (2011).

\bibitem{Jol} A. Jollivet, {\it On inverse problems in electromagnetic field in classical mechanics at fixed energy}, J. Geom. Anal. {\bf 17}(2), 275--319 (2007).

\bibitem{Ko}
\newblock V. V. Kozlov,
\newblock \emph{Calculus of variations in the large and classical mechanics},
\newblock Russian Math. Surveys, \textbf{40} (2011), no. 2, 37--71.

\bibitem{Mi}
\newblock R. Michel,
\newblock \emph{Sur la rigidit\'e impos\'ee par la longueur des g\'eod\'esiques},
\newblock Invent. Math., \textbf{65} (1981), 71--83.

\bibitem{Mu} R. G. Mukhometov,
On a problem of reconstructing Riemannian metrics,
{\em Siberian Math. J.} {\bf 22} (1982), no. 3, 420--433.

\bibitem{MR} R. G. Mukhometov, V.G. Romanov,
On the problem of finding an isotropic Riemannian metric
in $n$-dimensional space,
{\em Soviet Math. Dokl.} {\bf 19} (1979), no. 6, 1330--1333.

\bibitem{N1}
\newblock S. P. Novikov,
\newblock \emph{Variational methods and periodic solutions of equations of Kirchhoff type. II},
\newblock J. Functional Anal. Appl., \textbf{15} (1981), 263--274.

\bibitem{N2}
\newblock S. P. Novikov,
\newblock \emph{Hamiltonian formalism and a multivalued analogue of Morse theory},
\newblock Russian Math. Surveys, \textbf{37} (1982), no. 5, 1--56.

\bibitem{NS}
\newblock S. P. Novikov, I. Shmel'tser,
\newblock \emph{Periodic solutions of the Kirchhoff equations for the free
motion of a rigid body in a liquid, and the extended
Lyusternik-Schnirelmann-Morse theory. I.
},
\newblock J. Functional Anal. Appl., \textbf{15} (1981), 197--207.

\bibitem{N}R. G. Novikov, {\it Small angle scattering and X-ray transform in classical
mechanics}, Ark. Mat. {\bf 37}, 141--169 (1999).

\bibitem{PP}
\newblock G. P. Paternain, M. Paternain,
\newblock \emph{Anosov geodesic flows and  twisted symplectic structures},
\newblock in International Congress on Dynamical Systems in Montevideo (a tribute to Ricardo Ma\~n\'e),
F. Ledrappier, J. Lewowicz, S. Newhouse eds,
Pitman Research Notes in Math. \textbf{362} (1996), 132--145.

\bibitem{PU} L. Pestov, G. Uhlmann,
Two dimensional compact simple Riemannian manifolds are
boundary distance rigid,
{\em Ann. of Math.} {\bf 161}, no. 2 (2005), 1089--1106

\bibitem{Sh1} V. A. Sharafutdinov,
{\em Integral Geometry of Tensor Fields},
VSP, Utrecht, the Netherlands, 1994.

\bibitem{SU} P. Stefanov, G. Uhlmann,
Stability estimates for the X-ray transform of tensor fields
and boundary rigidity,
{\em  Duke Math. J.} {\bf 123} (2004), 445-467.

\bibitem{SU1}
P. Stefanov, G.Uhlmann,
Boundary rigidity and stability for generic simple metrics,
{\em J. Amer. Math. Soc.}  {\bf 18} (2005), 975--1003.

\bibitem{SU2} P. Stefanov, G.Uhlmann,
Recent progress on the boundary rigidity problem,
{\em Electron. Res. Announc. Amer. Math. Soc.} {\bf 11} (2005),  64-70.

\bibitem{SUV} P. Stefanov, G. Uhlmann, A. Vasy, \emph{Boundary rigidity with partial data}, preprint, arXiv:1306.2995.

\end{thebibliography}
\end{document}